\documentclass[11pt,reqno]{amsart}

\usepackage{amsfonts,amsmath,amssymb,amsthm}
\usepackage{mathtools}
\usepackage{anysize}
\marginsize{2cm}{2cm}{1.5cm}{1.5cm}
\usepackage[utf8]{inputenc}
\usepackage[english]{babel}
\usepackage{cmap}
\usepackage{xcolor}
\usepackage{hyperref}

\hypersetup{
  colorlinks = true,
  urlcolor = blue,
  linkcolor = blue,
  citecolor = red,
  pdftitle = {A colorful quantitative Helly theorem for volume},
  pdfauthor = {Grigory Ivanov},
  pdfsubject = {Combinatorial convexity}
}

\newtheoremstyle{theorem}
  {\topsep}{\topsep}{\itshape}{}
  {\bfseries}{.}{5pt plus 1pt minus 1pt}{}
\theoremstyle{theorem}
\newtheorem{thm}{Theorem}[section]
\newtheorem{prp}{Proposition}[section]
\newtheorem{lem}{Lemma}[section]
\newtheorem{conj}{Conjecture}[section]
\theoremstyle{definition}
\newtheorem{dfn}{Definition}[section]

\newcommand{\Href}[2]{\hyperref[#2]{#1~\ref{#2}}}
\newcommand{\st}{:\;}
\newcommand{\enorm}[1]{\left|#1\right|}
\DeclareMathOperator{\tr}{\mathrm{trace}}
\newcommand{\conv}{\mathrm{conv}}
\newcommand{\iprod}[2]{\left\langle#1,#2\right\rangle}
\DeclareMathOperator{\inter}{int}
\providecommand{\parenth}[1]{\left(#1\right)}
\providecommand{\braces}[1]{\left\{#1\right\}}
\providecommand{\brackets}[1]{\left[#1\right]}
\def\R{{\mathbb R}}
\def\Sph{{\mathbb S}}
\def\Lin{\mathop{\rm Lin}}
\def\diag{\mathop{\rm diag}}

\newcommand{\id}{\mathrm{Id}}
\newcommand{\poscone}[1]{\mathrm{Pos}\ \! {#1}}
\newcommand{\vol}[1]{\operatorname{vol}\nolimits_{#1}}
\providecommand{\abs}[1]{\lvert#1\rvert}
\providecommand{\card}[1]{\lvert#1\rvert}
\newcommand{\ball}[1]{\mathbf{B}^{#1}}
\newcommand{\transpose}[1]{{#1}^{T}}

\numberwithin{equation}{section}
\providecommand{\Bd}{\ball{d}}
\providecommand{\Sd}{\Sph^{d-1}}

\title[A colorful quantitative Helly theorem for volume]
{A colorful quantitative Helly theorem for volume}
\author{Grigory Ivanov}
\email{grimivanov@gmail.com}
\date{}
\subjclass[2020]{52A35, 52A37}
\keywords{Colorful Helly theorem, quantitative convexity, colorful Steinitz theorem, ellipsoids, mixed discriminants}

\begin{document}

\begin{abstract}
We prove a colorful quantitative Helly theorem for volume with the
optimal number $2d$ of colors. If every rainbow intersection from
$2d$ finite families of convex sets in $\R^d$ has volume at least
one, then the intersection of one family has volume at least
$d^{-O(d^2)}$. We also prove a colorful quantitative Steinitz theorem
for origin-centered ellipsoids of different shapes. The proof uses
a common normalization of positive operators and a lift that
produces two rainbow bases with large determinants.
\end{abstract}

\maketitle

\section{Introduction}

The classical Helly theorem \cite{helly1923mengen} states that a
finite family of convex sets in $\R^d$ has non-empty intersection
provided that every subfamily of at most $d+1$ members has
non-empty intersection. B\'ar\'any, Katchalski, and Pach
\cite{barany1982quantitative,barany1984helly} initiated the
quantitative study of this theorem, replacing non-emptiness by
lower bounds on volume or diameter. For volume, they proved that
if every subfamily of at most $2d$ members has intersection of
volume at least one, then the whole family has intersection of
volume at least $d^{-O(d^2)}$. The number $2d$ is optimal.

Nasz\'odi \cite{naszodi2016proof} resolved their conjecture by
improving the volume bound to $d^{-O(d)}$. This has the correct
order up to an absolute constant in the exponent. In the
equivalent selection formulation, one seeks $2d$ sets whose
intersection has volume at most $v_d$ times the volume of the
original intersection. Nasz\'odi proved
$v_d \le e^{d+1}d^{2d+1/2}$, while $v_d \ge d^{d/2}$ is necessary.
The best known upper bound, $v_d \le (Cd)^{3d/2}$ with an absolute
constant $C$, was proved by Brazitikos
\cite{brazitikos2017brascamp} and recovered by
Almendra-Hern\'andez, Ambrus, and Kendall
\cite{almendra2022quantitative}. The latter proof develops the
sparse approximation approach of Ivanov and Nasz\'odi
\cite{ivanov2022quantitative}, who showed that the selected
intersection can be contained in a homothet of the original
intersection with absolute homothety ratio at most $(2d)^3$.
For a recent account of quantitative Helly and Steinitz theorems
and related open problems, see Nasz\'odi
\cite{naszodi2025quantitative}.

The colorful version of the problem has proved more difficult.
De Loera, La Haye, Rolnick, and Sober\'on
\cite{de2017quantitative} proved a colorful volume theorem with
arbitrarily small volume loss, allowing the number of colors to
depend on the dimension and the loss. Rolnick and Sober\'on
\cite{rolnick2017quantitative} used floating bodies to prove
quantitative colorful Helly theorems for a wider class of
functions. Using parameter spaces of ellipsoids, Sarkar, Xue,
and Sober\'on \cite{sarkar2021quantitative} obtained a volume
theorem with $d(d+3)/2$ colors.
Dam\'asdi, F\"oldv\'ari, and Nasz\'odi
\cite{damasdi2021colorful} obtained a theorem with $3d$ colors:
if every rainbow intersection of $2d$ sets has volume at least
one, then one monochromatic intersection has volume at least
$d^{-O(d^2)}$. Recently, Ivanov and Nasz\'odi
\cite{Ivanov2026Hellynumbers} proved the colorful quantitative
Helly theorem for diameter with the optimal number $2d$ of
colors. For volume in the plane, B\'ar\'any, Miraftab, and
Theocharous 
recently
proved a colorful Helly theorem with the optimal number of four
colors. The main result of this paper establishes the volume
theorem in arbitrary dimension.

For a positive integer $n$, write
$\brackets{n} := \braces{1, \dots, n}$.
For a set $X \subset \R^d$, we write $\conv X$ for its convex hull
and $\vol{d}X$ for its $d$-dimensional volume. A \emph{rainbow
selection} from several families contains one member from each
family. A \emph{rainbow set} contains at most one member from each
family. Members with the same geometric position but different
color labels are treated as distinct.

\begin{thm}[Colorful quantitative Helly theorem for volume]
\label{thm:colorful_volume_helly}
For every $d \ge 1$ there is $\gamma_d > 0$ with the following
property. Let $\mathcal F_1, \dots, \mathcal F_{2d}$ be finite
non-empty families of convex sets in $\R^d$. Suppose that
\[
 \vol{d}\parenth{\bigcap_{i \in \brackets{2d}}F_i} \ge 1
 \qquad\text{whenever } F_i \in \mathcal F_i
 \text{ for every } i \in \brackets{2d}.
\]
Then, for some $i \in \brackets{2d}$,
\[
 \vol{d}\parenth{\bigcap_{F \in \mathcal F_i}F} \ge \gamma_d.
\]
One may take $\gamma_1 = 1$ and
$\gamma_d = d^{-O(d^2)}$ for $d \ge 2$.
The number $2d$ of colors is optimal.
\end{thm}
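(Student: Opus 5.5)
We follow the abstract's strategy: reduce the Helly statement to a colorful quantitative Steinitz-type statement for origin-centered ellipsoids, and prove that statement by finding rainbow bases with large determinants. The case $d=1$ is elementary. Convex sets are intervals, and we take $\gamma_1=1$. Let $a_i$ be the largest left endpoint in $\mathcal F_i$ and $b_i$ the smallest right endpoint, and pick $i,j$ with $a_i$ and $b_j$ extremal among all $a$'s and $b$'s. If $i=j$, then a rainbow choice realizing $a_i$ and $b_i$ in the other color as well shows that family $i$ has intersection $[a_i,b_i]$ of length at least one. If $i\neq j$, the rainbow pair gives $b_j-a_i\ge1$, which forces $b_i-a_i\ge1$. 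Optimality of $2d$ comes from the standard B\'ar\'any--Katchalski--Pach example: the $2d$ half-spaces bounding a cube, with each color class containing only one facet half-space together with copies of $\R^d$. Then every rainbow intersection of $2d-1$ classes is unbounded.

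For $d\ge2$, compactness and a standard approximation let us assume every set is a polytope. Following the $3d$-color argument of Dam\'asdi, F\"oldv\'ari, and Nasz\'odi, we argue by contradiction: suppose every monochromatic intersection $K_i=\bigcap\mathcal F_i$ has volume less than $\gamma_d$. Take a rainbow selection whose intersection $K$ is smallest in an appropriate sense. Pass to the John ellipsoid $E$ of $K$, and translate and normalize so that $E$ is centered at the origin.

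The key step is to choose, for each color $i$, finitely many sets of $\mathcal F_i$ that cut $K_i$ down to small volume. Via polarity, these sets correspond to ellipsoids $E_i^\circ$ of different shapes: small volume of $K_i$ relative to $E$ means that the polar body of color $i$ is large. Applying a colorful quantitative Steinitz theorem for origin-centered ellipsoids (the paper's second result), we would select $2d$ points, one per color, whose convex hull (equivalently, the corresponding rainbow intersection) has volume loss bounded by $d^{O(d^2)}$. That contradicts the hypothesis once $\gamma_d=d^{-O(d^2)}$.

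The Steinitz part is where the main work lies. Given positive operators $A_1,\dots,A_{2d}$, one per color, each with its ellipsoid covered by the convex hull of a color class, we must find a rainbow choice of $2d$ vectors whose convex hull contains a ball of radius $d^{-O(d)}$. The plan is as follows.
\begin{enumerate}
\item Normalize all $A_i$ simultaneously so that $\sum A_i=\id$, or so that a mixed discriminant equals one. This is the common normalization of positive operators.
\item Lift to $\R^{d+1}$ or $\R^{2d}$ and find two disjoint rainbow bases, each of $d$ colors, with determinants bounded below via mixed-discriminant (Bapat / Gurvits-type) inequalities.
\item Take the two bases $u_1,\dots,u_d$ and $v_1,\dots,v_d$ so that $\pm$-type combinations surround the origin. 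Their convex hull then contains a cross-polytope-like body of controlled inradius.
\end{enumerate}
Step 2 is the main obstacle: ensuring both bases have large determinant and that the origin lies deep inside the hull, rather than merely inside it. I would first try a greedy exchange, maximizing the determinant of the first basis and then of the second in a quotient, using the lift to enforce opposite orientation.
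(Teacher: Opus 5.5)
Your reduction of the case $d\ge2$ to Theorem~\ref{thm:ellipsoid_steinitz} fails at the polarity step, and this is where the real difficulty of the theorem lies.

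\textbf{The common pole.} The polar points of all colors must be taken about one common pole $p$; otherwise rainbow hulls of polar points do not correspond to rainbow intersections. Theorem~\ref{thm:ellipsoid_steinitz} then needs every $\conv X_i=(K_i-p)^\circ$ to contain a large origin-centered ellipsoid. This requires $p$ to be well placed in every $K_i$ at once; the paper's motivating case is a common John center of the $K_i$. Your pole, the John center of a rainbow intersection $K=\bigcap_i F_i$, gives no such control:
\begin{itemize}
\item $K_i\subset F_i$ does not place $K_i$ inside $K$, so $p$ need not lie in any $K_i$;
\item other members of $\mathcal F_i$ may not even contain $p$, and then they have no polar point;
\item your outline gives no reason why the $K_i$ should share a point at all, let alone one well centered in each.
\end{itemize}
Theorem~\ref{thm:ellipsoid_steinitz} really is restricted to a common center. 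Its analogue with different centers is exactly the open Conjecture~\ref{conj:different_centers}.

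\textbf{How the paper avoids this.} In the general case the paper does not go through the Steinitz theorem. It keeps normal--height pairs built from facet-area-weighted normals. These are balanced in each color and, after the common normalization, have average height at most $\kappa_d^{-1/d}$. An independent translation of color $i$ changes the heights by $\iprod{y}{c_i}$, and these changes sum to zero by balance. The affine conic identity (Lemma~\ref{lem:affine_conic_identity}) then shows the following. The determinant-weighted average strip width $\overline L(u)$ differs from its value for the constant heights $\beta_i$, which is uniformly bounded, by one linear function $\iprod{v}{u}$. The rest of the argument has three steps:
\begin{itemize}
\item orient a well-separated line so that $\iprod{v}{u}\le0$;
\item extract a pair of covering bases with small width and large first determinant (Lemma~\ref{lem:joint_selection});
\item apply the dual balanced ellipsoid lemma (Lemma~\ref{lem:dual_ellipsoid}) to bound the volume.
\end{itemize}
Nothing in your outline plays this role, so the main case is not proved.

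\textbf{The Steinitz part.} Your sketch also leaves the decisive point open. Two rainbow bases with large determinants need not surround the origin: all $2d$ vectors may lie in an open halfspace. A greedy exchange gives no positive balance with controlled coefficients. The paper keeps one large basis $B_1$ and represents the negative of its barycenter by a covering rainbow basis of the other group. Such a basis, with determinant at least $D_0/N^d$ and bounded coefficients, exists by the balanced conic covering identity (Lemma~\ref{lem:cone_density}, Theorem~\ref{thm:conic_completion}). That identity needs balanced colors with boundedly many points, which is why the paper first applies the radial balancing of Lemma~\ref{lem:radial_balancing} and later returns to the original points via Lemma~\ref{lem:radial_monotonicity}.

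\textbf{Two smaller errors.}
\begin{itemize}
\item In $d=1$, if one color has both the largest left endpoint and the smallest right endpoint, it is the \emph{other} color whose intersection has length at least one. The simplest argument adds $b_2-a_1\ge1$ and $b_1-a_2\ge1$.
\item Your optimality example fails, because a color $\braces{H_k,\R^d}$ has an intersection of infinite volume. Every color must consist of all $2d$ facet halfspaces of a small cube, so that every rainbow selection from $2d-1$ colors misses a facet.
\end{itemize}
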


The necessity of $2d$ follows by taking identical colors
consisting of the $2d$ facet halfspaces of a cube of arbitrarily
small volume. Every selection of fewer than $2d$ distinct facets
leaves an intersection of infinite volume.

Our approach follows an idea already used by B\'ar\'any,
Katchalski, and Pach \cite{barany1982quantitative}: pass to the
dual problem and use a quantitative Steinitz theorem. In this
problem, a convex hull containing the Euclidean unit ball is
replaced by the convex hull of at most $2d$ of its points,
which must still contain a controlled multiple of the ball.
Ivanov and Nasz\'odi \cite{ivanov2024steinitz} obtained the first
polynomial bound for this multiple. The related passage between
Steinitz and Helly problems by polarity is discussed in
\cite{ivanov_QST_polarity_2025}. For balls with a common center,
a colorful version with $2d$ colors was proved by De Loera,
La Haye, Rolnick, and Sober\'on
\cite[Lemma~2.6]{de2017quantitative}.

In \cite{ivanov2026colorfulsteinitztheoremdifferent}, the author
introduced a lifting to a $2d$-dimensional space for the colorful
problem. It led to a quantitative Steinitz theorem allowing the
balls contained in the color classes to have different centers.
For the volume problem, the corresponding ellipsoids may also
have different shapes, and no common affine map makes all of
them balls. Our main step is the following colorful Steinitz
theorem for ellipsoids with a common center.

Let $\Bd$ and $\Sd$ denote the Euclidean unit ball and unit sphere
in $\R^d$, and put $\kappa_d := \vol{d}\Bd$.

\begin{thm}[Centered ellipsoids in rainbow convex hulls]
\label{thm:ellipsoid_steinitz}
Let $X_1, \dots, X_{2d} \subset \R^d$. Suppose that for every
$i \in \brackets{2d}$ there is an invertible operator $A_i$ such that
\[
 A_i\Bd \subset \conv X_i,
 \qquad \abs{\det A_i} \ge 1.
\]
Then there are a rainbow set $R$ and an invertible operator $A$ with
\[
 A\Bd \subset \conv R,
 \qquad \abs{\det A} \ge c_d,
\]
where $c_d = d^{-O(d^2)}$.
\end{thm}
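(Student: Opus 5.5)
We will reduce to the case of a common ellipsoid shape by a normalization of the positive operators $P_i := A_i\transpose{A_i}$, and then run a lifting argument in $\R^{2d}$ that produces two rainbow bases with large determinants. The plan has four steps.

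\emph{Step 1: normalization.} First we find an invertible $T$ with $\abs{\det T}=1$ such that the $TP_i\transpose{T}$ are simultaneously well balanced. For instance, we would like
\[
\sum_{i\in\brackets{2d}} \frac{TP_i\transpose{T}}{\tr\parenth{TP_i\transpose{T}}} = \frac{2d}{d}\,\id .
\]
Such a $T$ exists as the minimizer of the log-concave potential $T\mapsto \sum_i \log\tr(TP_i\transpose T)$ on $\{\abs{\det T}=1\}$; this is the standard operator-scaling / Barthe-type argument. The mixed discriminant $D(P_1,\dots)$ then controls $\prod_i \det(TP_i\transpose{T})$ from below. Since the problem is invariant under $T$, we may replace $A_i$ by $TA_i$.

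After this step each ellipsoid $A_i\Bd$ contains the ball of radius $\lambda_{\min}(A_i)$. It may be very flat, but $\det A_i\ge1$ forces $\prod$ of its semi-axes to be at least $1$. We then use John-type point selections: by Carathéodory/Steinitz-type arguments in each color, $\conv X_i$ contains the ellipsoid, so for every unit direction $u$ there is $x\in X_i$ with $\iprod{x}{u}\ge \enorm{\transpose{A_i}u}$.

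\emph{Step 2: the lift.} Following \cite{ivanov2026colorfulsteinitztheoremdifferent}, lift each point $x\in X_i$ to $\hat x=(x, e\text{-type coordinates})\in\R^{2d}$. The extra coordinates encode the color, for example via $2d$ vectors forming a simplex-like frame, so that the lifted color classes satisfy the hypothesis of the colorful Carathéodory theorem of B\'ar\'any, with the origin deep inside each $\conv\hat X_i$. Colorful Carathéodory (or its quantitative determinant version, choosing a rainbow basis maximizing $\abs{\det}$) yields a rainbow set $\hat R$ of size $2d$ whose lifted points form a basis of $\R^{2d}$ containing the origin in the interior of their hull.

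Projecting gives a rainbow $R\subset\R^d$ with $0\in\inter\conv R$. To get a large volume we choose two rainbow bases $\{x_1,\dots,x_d\}$ and $\{y_1,\dots,y_d\}$, using disjoint colors, maximizing $\abs{\det}$ over rainbow choices. The lift forces the coefficients of $-y_j$ in the $x$-basis (and conversely) to be controlled, by Cramer's rule and maximality.

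\emph{Step 3: volume bound.} Maximality gives $\abs{\det(x_1,\dots,x_d)}\ge \max_{\sigma}\prod_j \enorm{\transpose{A_{\sigma(j)}}u_j}$ for suitable directions. Using the normalization, this is at least $d^{-O(d)}$ times a geometric-mean expression in the $\det A_i$, hence at least $d^{-O(d)}$. Combined with the controlled barycentric coefficients, $\conv R$ contains the cross-polytope $\conv\{\pm d^{-O(1)}x_j\}$, whose inscribed ellipsoid has determinant at least $d^{-O(d)}\abs{\det(x_j)}$. The crude estimates (each coefficient bounded by $d^{O(d)}$) multiply over $d$ coordinates, giving $c_d=d^{-O(d^2)}$.

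\emph{Main obstacle.} The main obstacle is Step 3: showing that a rainbow maximal-determinant basis has large determinant when the ellipsoids have different shapes. A single color may be long in some directions and thin in others, so greedy selection fails. I would first try a mixed-discriminant inequality,
\[
\max_\sigma \det\bigl(\text{rows } \transpose{A_{\sigma(j)}}u_j\bigr)^2 \gtrsim D\bigl(P_{\sigma(1)},\dots,P_{\sigma(d)}\bigr)/d!.
\]
Together with $D\ge\prod\det P_i^{1/d}$ (Alexandrov–Fenchel), this should handle a fixed set of $d$ colors. The role of the normalization is to guarantee that both halves of the colors can be used simultaneously.
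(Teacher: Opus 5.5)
Your proposal has a genuine gap, and it is in Step~2, not where you place the main obstacle. You never produce a rainbow set with a \emph{positive} balance whose weights are quantitatively controlled.

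The lift, as described, is not coherent. If the extra coordinates depend only on the color, as you suggest, then $\conv\hat X_i=\conv X_i\times\braces{e_i}$, which misses the origin unless $e_i=0$. The colorful Carath\'eodory theorem in $\R^{2d}$ needs $2d+1$ classes, and you have $2d$. Finally, $2d$ linearly independent vectors in $\R^{2d}$ cannot have the origin in the interior, or even in the affine hull, of their convex hull.

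Maximality does not rescue this. Maximizing a rainbow determinant and applying Cramer's rule bounds at most the absolute values of some coefficients: the replaced position, for a vector of the same color. It says nothing about vectors of the other group, and it never forces signs. The qualitative conic colorful Carath\'eodory theorem does give a rainbow set of the second group whose positive hull contains $-v$. However, its determinant, and hence the size of the coefficients, is uncontrolled.

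The missing idea is a quantitative covering statement: one large rainbow basis in a group must yield, in every direction, a \emph{covering} rainbow basis of controlled determinant. In the paper this is \Href{Theorem}{thm:conic_completion}, resting on \Href{Lemma}{lem:cone_density}. For finite \emph{balanced} colors, $\sum_{B\in\mathcal B(u)}\abs{\det B}$ is the same for all generic $u$, because across each facet span the completions from the missing color gain and lose equal determinant weight when $\sum_{y\in Y_k}\iprod{\nu}{y}=0$. To make this applicable, the paper replaces each $X_i$ by $N=2d(d+1)$ radial contractions $t_jx_j$ with $\sum_j t_jx_j=0$ and second moment $\succeq A_i\transpose{A_i}/(d^2N^2)$ (\Href{Lemma}{lem:radial_balancing}). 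It returns to the original points by \Href{Lemma}{lem:radial_monotonicity}. Once a positive balance with weights at least $\ell$ on the first basis is in hand, your cross-polytope ending in Step~3 does work, since each $-\ell x_j$ then lies in $\conv R$; the paper uses \Href{Lemma}{lem:balanced_ellipsoid} instead.

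The difficulty you single out is actually the easy part, and your normalization targets the wrong objects. For a fixed set of $d$ colors, multilinearity of the determinant gives a rainbow basis with $\abs{\det}\ge\abs{\det[A_1u_1\ \dots\ A_du_d]}$ for any unit vectors $u_j$. Averaging the squares over independent uniform $u_j\in\Sd$ gives $d^{-d}D(P_1,\dots,P_d)\ge d!/d^d$ by the inequality $D(P_1,\dots,P_d)\ge d!\prod_i(\det P_i)^{1/d}$ you quote, so no normalization is needed. The bound displayed at the start of Step~3 fails in general: take two equal ellipses with semi-axes $M,1/M$ and directions $(e_1\pm e_2)/\sqrt2$.

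What the covering argument really needs is two things. First, it needs a bound on the norms of the vectors, so that Hadamard's inequality turns determinant bounds into coefficient bounds. Normalizing the $P_i$ cannot give this, since points of $X_i$ may lie arbitrarily far away. Second, it needs large rainbow bases in \emph{both} halves of \emph{one} balanced configuration of bounded norm.

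The paper normalizes the second moments $C_i$ of the balanced contractions by a common $H$ with $\det H=1$ and per-color scalars $s_i\ge 1/(\sqrt d\,N)$, so that every vector has norm at most $\sqrt N$. It then lifts to $(Y_i\times\braces{0})\cup(\braces{0}\times Y_i)\subset\R^{2d}$, whose second moments $\frac12\diag(B_i,B_i)$ are doubly stochastic. Gurvits's inequality then yields a lifted basis that splits into two rainbow bases with determinants at least $\delta_{2d}N^{-d/2}$.
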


To prove this theorem, we choose a common extremal position for
the second-moment operators of the colors. A lifting to
$\R^d\times\R^d$ then produces two rainbow bases with large
determinants. We use one basis and a positive representation
in the other group to obtain an ellipsoid in the rainbow hull.
When the monochromatic intersections in the Helly problem have
John ellipsoids with a common center, polarity completes the
argument. To handle different centers, we retain the heights
of the defining halfspaces as well as their normals. Independent
translations enter through these heights, and an affine identity
for balanced colors controls their contribution.

We prove the Helly theorem in its equivalent selection form:
if every monochromatic intersection has volume at most one,
then some rainbow intersection has volume at most $C_d$, where
$C_d = \gamma_d^{-1}$. In dimension one, the smaller of the two
cross differences between the endpoint constraints of the colors
is at most the average of their monochromatic interval lengths;
a negative difference gives an empty intersection. Hence
$\gamma_1 = 1$. We assume $d \ge 2$ throughout the rest of the proof.

\Href{Section}{sec:strategy} explains the proof of
\Href{Theorem}{thm:ellipsoid_steinitz}.
Sections~\ref{sec:normalization} and~\ref{sec:lift} establish the
common normalization and the colorful conic statements.
\Href{Section}{sec:centered_proof} proves the Steinitz theorem
and states the open question for ellipsoids with different centers.
\Href{Section}{sec:affine_proof} contains the polyhedral reduction,
the dual construction, and the proof of the Helly theorem.

\subsection*{Use of AI tools}
ChatGPT was used during the development of the arguments and the
preparation of this manuscript, including the formulation and
checking of intermediate lemmas and revisions of the exposition.
{The author regards the argument in
\Href{Lemma}{lem:cone_density} and the identification of the
mixed-discriminant inequality stated in
\Href{Proposition}{prp:gurvits} as the AI agent's main contributions.}

\section{The idea of the proof}
\label{sec:strategy}

We first prove \Href{Theorem}{thm:ellipsoid_steinitz}, which
allows the shapes of the ellipsoids to vary while keeping their
center at zero.
An independent affine normalization of each color would change
the rainbow convex hulls. We instead choose one volume-preserving
linear map and allow a scalar rescaling within each color.

Before doing so, we pass to finite balanced collections of radial
contractions of the original points.
{This develops the approach of Ivanov and Nasz\'odi
\cite{ivanov2024steinitz}: at the cost of an additional factor in
the final bound, we obtain balanced sets with a controlled number
of vectors.}
Their second-moment operators retain a determinant lower bound from the ellipsoids.

The common normalization gives a second-moment identity to which
a mixed-discriminant inequality applies. After lifting the colors
to $\R^d \times \R^d$, this yields a partition into two groups,
each with a large rainbow basis. We keep the basis from one group
and use the other group to represent the opposite of its
barycenter. The resulting positive balance gives the required
inscribed ellipsoid. The nontrivial selection step is that one
large rainbow basis, together with balance in each color, gives
a covering rainbow basis of controlled determinant in every
direction. This is \Href{Theorem}{thm:conic_completion}.

In the general Helly problem, the halfspaces carry both normal
vectors and heights. Facet areas give balanced normals whose
second moments retain the volume information. Translations change
the individual heights, but not their average within a color.
The affine counterpart of the conic identity then selects a
positive balance with a large determinant and a bounded weighted
height sum. The polar form of the same ellipsoid inclusion gives
the desired volume upper bound.
{This adapts the lifting idea from
\cite{ivanov2026colorfulsteinitztheoremdifferent}. Recording the
heights in an additional coordinate gives a lift to
$\R^{2d+1}$. Normalizing a positive weighted height sum to one
places the resulting point in a $2d$-dimensional affine hyperplane.}

\section{Balanced data and a common extremal position}
\label{sec:normalization}

In this section the ambient dimension is denoted by $n$ and the
number of colors by $m$. This notation will also be used for the
colorful conic statements, which we apply in dimensions $d$ and
$2d$. The letter $N$ denotes an upper bound on the size of a
finite color.

A finite indexed set $Z \subset \R^n$ is \emph{balanced} if
$\sum_{z \in Z}z = 0$. Sums over indexed sets count every label,
including labels attached to coincident vectors.

For real matrices in $\R^{p\times q}$, we use the Hilbert--Schmidt
inner product
\[
 \iprod{A}{B} := \tr{\transpose{A}B}.
\]
For an invertible operator $B$, we write
\[
 B^{-T} := \transpose{(B^{-1})} = \parenth{\transpose{B}}^{-1}
\]
for the transpose of its inverse.
Write $\mathrm{Sym}_n$ for the Euclidean space of real symmetric
operators on $\R^n$, and $\mathrm{Sym}_n^+$ for its cone of
positive definite operators. Thus, $\iprod{A}{\id_n}$ is the trace of $A$.
For vectors, $\iprod{\cdot}{\cdot}$ denotes their usual scalar
product.

We choose the common position so that the second-moment operators
satisfy the hypotheses of Gurvits's mixed-discriminant inequality:
each operator has trace one, and their sum is a scalar multiple
of the identity. When the number of colors equals the dimension,
these are precisely the \emph{doubly stochastic conditions}.
The normalization is an operator-scaling step; see
\cite[Theorem~5.1]{gurvits2006van} for the case of $n$ operators
on $\R^n$. The use of constrained optimality conditions in this
setting goes back to Fritz John \cite{John}, as discussed in
\cite[Section~3.2]{gurvits2006van}. A related variational approach
to second-moment identities appears in
\cite{ivanov2025johnellipsoidsrevolution}.

Given $C_1, \dots, C_m \in \mathrm{Sym}_n^+$, we minimize
\[
 \mathcal F(H) := \sum_{i \in \brackets{m}}\ln\iprod{C_i}{H},
 \qquad H \in \mathrm{Sym}_n^+,\quad \det H = 1.
\]
The identity
$\iprod{C_i}{H} = \iprod{H^{1/2}C_iH^{1/2}}{\id_n}$
will translate its optimality condition into the desired
normalization of the vectors.

\begin{lem}
\label{lem:normalization}
Let $C_1, \dots, C_m$ be positive definite operators on $\R^n$.
There is a positive definite operator $H$ with $\det H = 1$ such that
\begin{equation}
 \sum_{i \in \brackets{m}}\frac{C_i}{\iprod{C_i}{H}}
 = \frac{m}{n}H^{-1}.
 \label{eq:positive_normalization}
\end{equation}
Moreover, for every $i \in \brackets{m}$,
\[
 \iprod{C_i}{H} \ge n\parenth{\det C_i}^{1/n} > 0.
\]
\end{lem}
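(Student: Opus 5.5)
The plan is to obtain $H$ as a minimizer of $\mathcal F$ over $\{H \in \mathrm{Sym}_n^+ \st \det H = 1\}$ and read off \eqref{eq:positive_normalization} as the Lagrange condition. The lower bound is independent of the minimization, so I would prove it first.

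\emph{Lower bound.} For every positive definite $H$ with $\det H = 1$, the operator $H^{1/2}C_iH^{1/2}$ is positive definite with determinant $\det C_i$. Applying AM--GM to its eigenvalues gives
\[
 \iprod{C_i}{H} = \tr\parenth{H^{1/2}C_iH^{1/2}} \ge n\parenth{\det C_i}^{1/n} > 0 .
\]
In particular $\mathcal F$ is well defined and bounded below on the constraint set.

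\emph{Existence of a minimizer.} This is the step I expect to be the main obstacle, because the constraint set is not compact. I would use coercivity. Let $\lambda > 0$ be a common lower bound for the smallest eigenvalues of the $C_i$. Then $\iprod{C_i}{H} \ge \lambda \tr H \ge \lambda \lambda_{\max}(H)$. Applying the lower bound above to all indices $j \ne 1$ gives
\[
 \mathcal F(H) \ge \ln\parenth{\lambda\lambda_{\max}(H)} + \mathrm{const}.
\]
Hence $\mathcal F(H) \to \infty$ as $\lambda_{\max}(H) \to \infty$, and a sublevel set forces $\lambda_{\max}(H) \le M$. Since $\det H = 1$, we then have $\lambda_{\min}(H) \ge M^{-(n-1)}$. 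So the sublevel set is a closed and bounded subset of the interior of the cone, hence compact, and a minimizer exists by continuity.

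\emph{Optimality condition.} The constraint $\det H = 1$ defines a smooth hypersurface in $\mathrm{Sym}_n^+$, since the gradient of $\ln\det H$ is $H^{-1} \ne 0$. The gradient of $\mathcal F$ is $\sum_i C_i/\iprod{C_i}{H}$. By Lagrange multipliers there is a scalar $\mu$ with
\[
 \sum_i \frac{C_i}{\iprod{C_i}{H}} = \mu H^{-1}.
\]
To identify $\mu$, take the inner product of both sides with $H$. The left side gives $\sum_i 1 = m$, and the right side gives $\mu\iprod{H^{-1}}{H} = \mu n$. Therefore $\mu = m/n$, which is exactly \eqref{eq:positive_normalization}.

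As an alternative that avoids the constrained calculus, one can use the scale invariance of $\tilde{\mathcal F}(H) = \mathcal F(H) - \frac{m}{n}\ln\det H$ on all of $\mathrm{Sym}_n^+$. Setting its unconstrained gradient to zero gives the same identity directly.
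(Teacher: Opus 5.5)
Your proposal is correct and follows essentially the same route as the paper: you minimize $\mathcal F$ on $\det H = 1$, obtain compact sublevel sets from $\iprod{C_i}{H} \ge \lambda_{\min}(C_i)\tr H$ together with $\det H = 1$, identify the multiplier as $m/n$ by pairing with $H$, and get the lower bound from AM--GM applied to the eigenvalues of $H^{1/2}C_iH^{1/2}$. The differences are minor: you bound only one term by the trace and the others by the AM--GM constant, whereas the paper bounds all $m$ terms by the trace. You also add the scale-invariant unconstrained reformulation as an alternative, which the paper does not use.
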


\begin{proof}
For $H \in \mathrm{Sym}_n^+$, one has
$\iprod{C_i}{H} \ge \lambda_{\min}(C_i)\iprod{H}{\id_n}$.
Thus,
\[
 \mathcal F(H) \ge 
 \sum_{i \in \brackets{m}}\ln\lambda_{\min}(C_i)
 +m\ln\iprod{H}{\id_n}.
\]
On every sublevel set the eigenvalues of $H$ are bounded above.
Their product is one, so they are also bounded away from zero.
The sublevel sets are therefore compact, and a minimizer exists.

At a minimizer, the Lagrange multiplier rule for $\ln\det H = 0$
gives
\[
 \sum_{i \in \brackets{m}}\frac{C_i}{\iprod{C_i}{H}} = \alpha H^{-1}.
\]
Indeed, the differentials in a direction $U \in \mathrm{Sym}_n$
are $\sum_{i \in \brackets{m}}\iprod{C_i}{U}/\iprod{C_i}{H}$ and
$\iprod{H^{-1}}{U}$, respectively.
Pairing the displayed identity with $H$ gives $m = \alpha n$,
which proves \eqref{eq:positive_normalization}.

Finally, apply the arithmetic--geometric mean inequality to
the eigenvalues of $H^{1/2}C_iH^{1/2}$:
\[
 \iprod{C_i}{H}
 \ge n\parenth{\det(H^{1/2}C_iH^{1/2})}^{1/n}
 = n\parenth{\det C_i}^{1/n}.\qedhere
\]
\end{proof}

To apply the lemma to finite balanced colors $Z_i$, put
\[
 C_i := \frac{1}{\card{Z_i}}\sum_{z \in Z_i}z\otimes z,
 \qquad s_i := \sqrt{\iprod{C_i}{H}},
 \qquad Y_i := \braces{H^{1/2}z/s_i:z \in Z_i}.
\]
Here the $C_i$ are assumed positive definite. The map $H^{1/2}$
preserves volume, and the scalar $s_i$ is specific to the color.
The second-moment operators of the new colors are
\[
 B_i = \frac{H^{1/2}C_iH^{1/2}}{\iprod{C_i}{H}},
 \qquad \iprod{B_i}{\id_n} = 1,
 \qquad \sum_{i \in \brackets{m}}B_i = \frac{m}{n}\id_n,
\]
where the last identity follows by multiplying
\eqref{eq:positive_normalization} on both sides by $H^{1/2}$.
Their centroids remain at zero.

\begin{dfn}
\label{dfn:normalized_colors}
A family of non-empty finite indexed colors
$Y_1, \dots, Y_m \subset \R^n$ is \emph{normalized} if, for every
$i \in \brackets{m}$,
\begin{equation}
 \sum_{y \in Y_i}y = 0,
 \qquad B_i := \frac{1}{\card{Y_i}}\sum_{y \in Y_i}y\otimes y,
 \qquad \iprod{B_i}{\id_n} = 1,
 \qquad \sum_{i \in \brackets{m}}B_i = \frac{m}{n}\id_n.
 \label{eq:normalized_colors}
\end{equation}
\end{dfn}

If $\card{Y_i} \le N$, every vector has norm at most $\sqrt N$.
The definition allows individual $B_i$ to be singular.
For $m = n$, the operators form a doubly stochastic family.
For $m = 2n$, their sum is $2\id_n$; the lifting used below replaces
$B_i$ by $\frac{1}{2}\diag(B_i,B_i)$ and produces a doubly
stochastic family of $2n$ operators on $\R^{2n}$.

\section{Colorful conic covering}
\label{sec:lift}

Positive balance has a useful consequence: one rainbow basis
with a large determinant yields a covering rainbow basis in every
direction, with a controlled loss in the determinant.
The geometric quantity here is the volume of the parallelotope
spanned by the basis. Together with an upper bound on the vector
lengths, it controls the coefficients of a positive representation.

\begin{dfn}
\label{dfn:rainbow_bases}
For finite indexed colors $Y_1, \dots, Y_n \subset \R^n$, a
\emph{rainbow basis} is an invertible matrix
$B = [y_1\ \dots\ y_n]$ with $y_i \in Y_i$ for
$i \in \brackets{n}$; its columns are ordered by color.
Write $\mathcal B$ for the family of these indexed bases. For $u \in \R^n$, put
\[
 \poscone{B} := \braces{Ba:a \in [0,\infty)^n},
 \qquad
 \mathcal B(u) := \braces{B \in \mathcal B:u \in \poscone{B}}.
\]
Coincident vectors retain their labels. A vector is
\emph{generic} if it avoids the spans of all selections of at
most $n-1$ vectors from the configuration.
\end{dfn}

In the quantitative estimates, we use the constant
\begin{equation}
 \delta_n := \sqrt{\frac{n!}{n^n}} \ge e^{-n/2},
 \qquad n \ge 1.
 \label{eq:delta}
\end{equation}
The inequality follows from $n! \ge (n/e)^n$.

\begin{thm}
\label{thm:conic_completion}
Let $Y_1, \dots, Y_n \subset \R^n$ be non-empty finite indexed colors
with $\sum_{y \in Y_i}y = 0$ for every $i \in \brackets{n}$.
Suppose that each color has at most $N$ vectors, all of norm at
most $R$, and that some rainbow basis has absolute determinant
at least $D>0$. Then every $u \in \R^n$ has a rainbow basis
$B = [r_1\ \dots\ r_n]$ and a positive representation
\[
 u = \sum_{i \in \brackets{n}}a_i r_i,\qquad a_i\ge0,
\]
such that, for every $i \in \brackets{n}$,
\[
 \abs{\det B} \ge \frac{D}{N^n},
 \qquad
 a_i \le \frac{N^nR^{n-1}}{D} \enorm{u}
.
\]
If the $n$ colors are normalized, one may take $D = \delta_n$
and $R = \sqrt N$, with $\delta_n$ defined in \eqref{eq:delta}.
\end{thm}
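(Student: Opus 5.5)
The plan has two parts. The coefficient bound follows from the determinant bound by Cramer's rule. The determinant bound comes from a pivoting argument that uses the balance of each color through multilinearity of the determinant.

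\emph{Reduction to the determinant bound.} Suppose $B=[r_1\ \dots\ r_n]$ is a rainbow basis with $u\in\poscone{B}$ and $\abs{\det B}\ge D/N^n$. Write $B_{i\leftarrow v}$ for $B$ with its $i$-th column replaced by $v$. Cramer's rule gives $a_i=\det B_{i\leftarrow u}/\det B$. Hadamard's inequality gives $\abs{\det B_{i\leftarrow u}}\le R^{n-1}\enorm{u}$. Together these yield $a_i\le N^nR^{n-1}\enorm{u}/D$, as claimed. For the normalized case, I take $R=\sqrt N$ from the remark after \Href{Definition}{dfn:normalized_colors}. To obtain $D=\delta_n$, I would use the normalization identities in an averaging argument, as follows.
\begin{itemize}
\item Let $B_1,\dots,B_n$ be the second-moment operators of the colors. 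Averaging $\det[y_1\ \dots\ y_n]^2$ over all rainbow tuples gives $n!$ times the mixed discriminant $D(B_1,\dots,B_n)$, by multilinearity.
\item The family $B_1,\dots,B_n$ is doubly stochastic. Gurvits's inequality (\Href{Proposition}{prp:gurvits}) then bounds the mixed discriminant below by $n^{-n}$.
\item Hence some rainbow tuple has $\det^2\ge n!/n^n=\delta_n^2$, which is the required $D$.
\end{itemize}
It also suffices to treat generic $u$, by compactness of the finite family of bases together with continuity of the coefficients.

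\emph{Basic swap.} The key elementary fact is the following. Let $B$ be a rainbow basis and $i\in\brackets{n}$. Balance of $Y_i$ and linearity in column $i$ give
\[
\sum_{y\in Y_i}\det B_{i\leftarrow y}=0 .
\]
The term $y=r_i$ equals $\det B$, so the remaining at most $N-1$ terms sum to $-\det B$. Hence some $y\in Y_i$ satisfies $\abs{\det B_{i\leftarrow y}}\ge \abs{\det B}/N$ and has sign opposite to $\det B$. Swapping in this $y$ leaves $\det B_{i\leftarrow u}$ unchanged and flips the sign of $\det B$. Therefore the coefficient $a_i$ changes from negative to positive, at the cost of a factor at most $N$ in the determinant.

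\emph{Organizing the swaps.} Starting from the basis of determinant $D$, I want at most $n$ swaps, each in a different color, ending in a basis whose cone contains $u$. This would give exactly the loss $N^{-n}$. The difficulty is that a swap in color $i$ can change the signs of the other coefficients. I would control this by induction on the dimension via projection.
\begin{itemize}
\item Process the colors in a fixed order. After color $n$ is settled, project along its chosen column $r_n$ onto a complementary hyperplane and continue in dimension $n-1$ with the projected colors $Y_1,\dots,Y_{n-1}$. These colors remain balanced, since projection is linear.
\item The determinant factorizes as $\abs{\det B}=\abs{\det(\text{projected basis})}\cdot\mathrm{dist}(r_n,\text{hyperplane})$. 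This lets the lower bound pass down the induction with one factor $N$ lost per level.
\item The projected vector $u$ is handled recursively. The coefficient of $r_n$ is made nonnegative by the swap above, applied once at the top level.
\end{itemize}

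\emph{Main obstacle.} The hard part is arranging the induction so that fixing the sign at one level is not undone later. Concretely, the choice of $r_n$ must make the sign condition on $a_n$ compatible with every later choice in the projected problem. What I would try first is to choose $r_n$ through the swap lemma applied to the whole family of bases that share the remaining columns, using $\sum_{y\in Y_n}\det B_{n\leftarrow y}=0$ uniformly.

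If that fails, the fallback is a continuous pivoting path. Move $u$ along a generic segment from the point $B_0(1,\dots,1)^T$, which lies in the interior of $\poscone{B_0}$, to $u$. At each facet crossing, perform the swap in the color opposite that facet. This requires bounding the total number of distinct colors swapped, which I expect to be the genuinely delicate step.
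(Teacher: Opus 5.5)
\textbf{There is a genuine gap.} Several parts of your proposal are fine and match the paper: the reduction of the coefficient bound to the determinant bound via Cramer's rule and Hadamard's inequality, the limiting argument for non-generic $u$, and the normalized case via Cauchy--Binet and Gurvits. (Your factor $n!$ and bound $n^{-n}$ use the normalization $D(\id_n,\dots,\id_n)=1$ rather than the coefficient convention of \Href{Proposition}{prp:gurvits}, but the product is the same $\delta_n^2$.)

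The gap is the central claim that some $B\in\mathcal B(u)$ has $\abs{\det B}\ge D/N^n$, which you do not establish. In the projection scheme, the coefficient of $r_n$ is $\det B_{n\leftarrow u}/\det B$. Its sign is determined only after the recursion has chosen $r_1,\dots,r_{n-1}$. If it comes out negative, swapping $r_n$ changes the projection direction, and the projected basis need no longer cover the projected $u$. So the scheme is circular, not just delicate. The pivoting path does work qualitatively. A generic segment meets each of the at most $nN^{n-1}$ rainbow facet hyperplanes at most once. At each exit through a facet missing color $k$, balance supplies $y\in Y_k$ on the far side with $\abs{\iprod{\nu}{y}}\ge\abs{\iprod{\nu}{r_k}}/N$. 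But nothing prevents the same color from being swapped repeatedly, so this only yields $\abs{\det B}\ge D\,N^{-nN^{n-1}}$, far from $D/N^n$.

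The missing idea is to track the total weight $f(u)=\sum_{B\in\mathcal B(u)}\abs{\det B}$ instead of a single basis. Your identity $\sum_{y\in Y_k}\det B_{k\leftarrow y}=0$ is the right input, but it should be applied to all completions at once rather than used to select one. Let $u$ cross the span $H$ of a rainbow facet $F$ missing color $k$, with unit normal $\nu$. Take the crossing point $z$ in the relative interior of $F$ and on no other facet hyperplane. In passing from $z-t\nu$ to $z+t\nu$:
\begin{itemize}
\item the completions of $F$ by $y\in Y_k$ with $\iprod{\nu}{y}>0$ enter $\mathcal B(u)$;
\item those with $\iprod{\nu}{y}<0$ leave it;
\item each has absolute determinant $\omega_F\abs{\iprod{\nu}{y}}$, where $\omega_F$ is the $(n-1)$-dimensional volume of the parallelotope spanned by $F$.
\end{itemize}
Since $\sum_{y\in Y_k}\iprod{\nu}{y}=0$, the entering and leaving weights are equal. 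Every basis whose cone has $z$ on its boundary has $z$ in the relative interior of exactly one of its facets. Hence the changes split into such facet groups, and $f$ is constant on generic vectors (\Href{Lemma}{lem:normal_shifts} and \Href{Lemma}{lem:cone_density}). Evaluating $f$ inside the cone of the given basis gives $f\ge D$ everywhere. Since $\card{\mathcal B(u)}\le N^n$, pigeonhole yields a covering basis with $\abs{\det B}\ge D/N^n$. The exponent $n$ comes from counting bases, not from counting swaps.
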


We first prove the qualitative covering statement, a conic
analogue of colorful Steinitz, together with the determinant
identity that gives the quantitative bound. The local step
involves one cone of dimension $n-1$ and one balanced set.

\begin{lem}
\label{lem:normal_shifts}
Let $f_1, \dots, f_{n-1} \in \R^n$ be linearly independent, put
$C := \poscone{\braces{f_1, \dots, f_{n-1}}}$, and let $\nu$ be a unit
normal to $H := \Lin C$. Let $Y$ be a non-empty finite indexed
set with $\sum_{y \in Y}y = 0$. For $y \in Y$, put
\[
 B(y) := [f_1\ \dots\ f_{n-1}\ y],
 \qquad C_y := \poscone{\braces{f_1, \dots, f_{n-1},y}}.
\]
For every $z$ in the relative interior of $C$, there is
$\varepsilon > 0$ such that, for every $y \in Y$ and every
$0 < t < \varepsilon$,
\begin{equation}
 z \pm t\nu \in C_y
 \quad\Longleftrightarrow\quad
 \pm\iprod{\nu}{y} > 0,
 \label{eq:conic_transition}
\end{equation}
with matching signs. Moreover,
\[
 \sum_{\substack{y \in Y\\z+t\nu \in C_y}}\abs{\det B(y)}
 = 
 \sum_{\substack{y \in Y\\z-t\nu \in C_y}}\abs{\det B(y)}
 = \frac{1}{2}\sum_{y \in Y}\abs{\det B(y)}.
\]
\end{lem}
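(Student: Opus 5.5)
The plan is to reduce membership in $C_y$ to a sign condition on the last coordinate in a basis adapted to $H$, and then to note that $\abs{\det B(y)}$ is linear in $\abs{\iprod{\nu}{y}}$.

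\emph{The equivalence \eqref{eq:conic_transition}.} Write $z = \sum_{j<n} b_j f_j$ with all $b_j > 0$; this is possible because $z$ lies in the relative interior of $C$.

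First suppose $\iprod{\nu}{y} = 0$. Then $y \in H$, so $C_y \subset H$. Since $z \pm t\nu \notin H$ for $t > 0$, neither point lies in $C_y$, which agrees with the right-hand side being false for both signs.

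Now suppose $\iprod{\nu}{y} \ne 0$. Then $f_1,\dots,f_{n-1},y$ is a basis, and we solve
\[
 z \pm t\nu = \sum_{j<n} a_j f_j + s\, y .
\]
Pairing with $\nu$ gives $s = \pm t/\iprod{\nu}{y}$. The remaining coefficients are $a_j = b_j - s\,c_j(y)$, where $y - \iprod{\nu}{y}\nu = \sum_j c_j(y) f_j$. Hence $s \ge 0$ exactly when $\pm\iprod{\nu}{y} > 0$. Moreover, $a_j > 0$ as soon as $\abs{s}\,\abs{c_j(y)} < b_j$.

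Since $Y$ is finite, we choose
\[
 \varepsilon < \min \frac{b_j\,\abs{\iprod{\nu}{y}}}{\abs{c_j(y)}},
\]
the minimum taken over all $y$ with $\iprod{\nu}{y} \ne 0$ and all $j$ with $c_j(y) \ne 0$. With this $\varepsilon$, for $0 < t < \varepsilon$ the point $z \pm t\nu$ lies in $C_y$ if and only if $s > 0$, that is, if and only if $\pm\iprod{\nu}{y} > 0$.

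\emph{The determinant identity.} Decompose $y$ as $\iprod{\nu}{y}\nu$ plus a component in $H$. By multilinearity, the component in $H$ contributes nothing, so
\[
 \det B(y) = \iprod{\nu}{y}\,\det[f_1\ \dots\ f_{n-1}\ \nu].
\]
Put $K := \abs{\det[f_1\ \dots\ f_{n-1}\ \nu]} > 0$. Then $\abs{\det B(y)} = K\abs{\iprod{\nu}{y}}$.

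By the first part, the left-hand sum equals $K\sum_{\iprod{\nu}{y}>0}\iprod{\nu}{y}$, and the middle sum equals $K\sum_{\iprod{\nu}{y}<0}(-\iprod{\nu}{y})$. Balance gives $\sum_{y\in Y}\iprod{\nu}{y} = 0$, so the positive part and the negative part have equal magnitude. Each therefore equals half of $\sum_{y \in Y}\abs{\iprod{\nu}{y}}$. Multiplying by $K$ gives both sums equal to $\frac{1}{2}\sum_{y\in Y}\abs{\det B(y)}$; vectors with $\iprod{\nu}{y}=0$ contribute zero to every sum.

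The main obstacle is the choice of $\varepsilon$ uniform over $Y$ in the first part. It is handled by the finiteness of $Y$ and the strict positivity of the $b_j$.
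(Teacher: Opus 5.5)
Your proof is correct and follows essentially the same route as the paper. You write $z=\sum b_jf_j$ with $b_j>0$, expand $z\pm t\nu$ in the basis $f_1,\dots,f_{n-1},y$, read off membership from the sign of $\pm t/\iprod{\nu}{y}$ with a uniform $\varepsilon$ from finiteness, and then combine $\abs{\det B(y)}=K\abs{\iprod{\nu}{y}}$ (the paper's base-times-height formula) with $\sum_{y\in Y}\iprod{\nu}{y}=0$; your explicit formula for $\varepsilon$ simply makes precise the paper's appeal to finiteness of $Y$.
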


\begin{proof}
Write $z = \sum_{i \in \brackets{n-1}}b_i f_i$, with $b_i > 0$ for every
$i \in \brackets{n-1}$. For $y \in Y\setminus H$, put
$s_y := \iprod{\nu}{y}$ and write
$y = \sum_{i \in \brackets{n-1}}c_i(y)f_i+s_y \nu$. Then
\[
 z+t\nu = 
 \sum_{i \in \brackets{n-1}}\parenth{b_i-\frac{t c_i(y)}{s_y}}f_i
 +\frac{t}{s_y}y.
\]
Since $Y$ is finite, all coefficients of the $f_i$ are positive
for $\abs{t} < \varepsilon$, with one $\varepsilon > 0$
independent of $y$. Membership in $C_y$ is therefore determined
by the sign of $t/s_y$. If $y \in H$, then $C_y \subset H$,
so neither nonzero normal shift belongs to $C_y$. This proves
\eqref{eq:conic_transition} simultaneously for all $y \in Y$.

Let $\omega$ be the $(n-1)$-dimensional volume of the
parallelotope spanned by $f_1, \dots, f_{n-1}$. Set
$s_y := \iprod{\nu}{y}$ also for $y \in Y \cap H$, and put
$Y^\pm := \braces{y \in Y:\pm s_y > 0}$.
The base-times-height formula gives
$\abs{\det B(y)} = \omega\abs{s_y}$. Hence
\[
 \sum_{y \in Y^+}\abs{\det B(y)}
 -\sum_{y \in Y^-}\abs{\det B(y)}
 = \omega\sum_{y \in Y}s_y
 = \omega\iprod{\nu}{\sum_{y \in Y}y} = 0.
\]
The two sums are equal, and their sum is
$\sum_{y \in Y}\abs{\det B(y)}$, since the remaining
determinants vanish. Together with \eqref{eq:conic_transition},
this proves the assertion.
\end{proof}

\begin{lem}[Balanced conic covering]
\label{lem:cone_density}
Let $Y_1, \dots, Y_n$ be non-empty finite indexed colors in $\R^n$
with $\sum_{y \in Y_i}y = 0$ for every $i \in \brackets{n}$.
For $u \in \R^n$, put
\[
 f(u) := \sum_{B \in \mathcal B(u)}\abs{\det B}.
\]
Then $f$ takes the same value at all generic vectors. If a rainbow
basis exists, this value is positive and the rainbow positive
cones cover $\R^n$.
\end{lem}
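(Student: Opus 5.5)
The plan is the standard wall-crossing (degree) argument: show that $f$ is locally constant on the set of generic vectors and that this set is connected. Lemma~\ref{lem:normal_shifts} handles the local step.

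First, the set $G$ of generic vectors. It is the complement of finitely many linear subspaces of dimension at most $n-1$, namely the spans of selections of at most $n-1$ vectors from the configuration. I will use the hyperplane spans of independent $(n-1)$-selections, together with proper subspaces containing the lower-dimensional spans.

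Since $n \ge 2$ in the nontrivial case, $G$ is open, dense, and path-connected. Moreover, any two generic vectors can be joined by a polygonal path that meets the union of these hyperplanes only at finitely many points, each lying on exactly one of them and outside all $(n-2)$-dimensional spans. This is a general-position perturbation: the bad set of codimension at least two can be avoided by a segment path. For $n=1$ the statement is direct. Balance in the single color gives $\sum_{y>0}\abs{y}=\sum_{y<0}\abs{y}$, so $f(1)=f(-1)$.

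Away from the walls, $f$ is constant. For $u\in G$, the membership $u\in\poscone{B}$ changes only when $u$ crosses a boundary facet of $\poscone{B}$. Such a facet lies in the span of $n-1$ columns, that is, in one of the hyperplanes above. So along a path segment avoiding the walls, $\mathcal B(u)$ is constant.

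The main step is to compare the two sides of a wall at a crossing point $z$. Here $z$ lies in exactly one hyperplane $H$ and in the relative interior of any $(n-1)$-cone that it meets. The cones $\poscone{B}$ whose status changes are exactly those having a facet in $H$ that contains $z$. Group such bases by the facet, that is, by the $(n-1)$ columns $f_1,\dots,f_{n-1}$ of colors other than some color $j$, spanning $H$ with $z\in\poscone{\braces{f_1,\dots,f_{n-1}}}$. The remaining column ranges over $Y_j$.

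There is one subtlety. Several different facet-selections (possibly of different missing colors $j$) may span the same hyperplane $H$ and contain $z$, so each group is treated separately. For a fixed group, Lemma~\ref{lem:normal_shifts} says the bases of the group containing $z+t\nu$ and those containing $z-t\nu$ have equal total $\abs{\det}$, each equal to half the group sum. Bases with column $y\in H$ are degenerate and are not in $\mathcal B$.

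Bases not in any group either contain $z$ in their interior or are far from $z$, so their status is unchanged near $z$. Summing over groups gives $f(z+t\nu)=f(z-t\nu)$. I have to check that a single basis is never counted in two groups. A basis containing $z$ on a facet has only one facet through $z$, since $z$ avoids the $(n-2)$-dimensional spans, so this is fine.

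Positivity is the remaining claim. If a rainbow basis $B_0$ exists, pick a generic $u$ in the interior of $\poscone{B_0}$, which is nonempty and open, and $G$ is dense. Then $f(u)\ge\abs{\det B_0}>0$, hence $f>0$ at every generic vector, so each generic vector lies in some rainbow cone. The cones are closed and $G$ is dense, so the finite union of cones is closed and contains $G$, and therefore covers $\R^n$.

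The step I expect to be the main obstacle is the bookkeeping at the crossing: making sure every basis whose membership changes is accounted for by exactly one application of Lemma~\ref{lem:normal_shifts}, with the single $\varepsilon$ chosen uniformly over the finitely many groups.
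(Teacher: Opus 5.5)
Your proof is correct and follows the paper's argument essentially step by step. You join chambers by paths that cross one facet hyperplane at a time, group the changing bases by their unique indexed facet through $z$, apply Lemma~\ref{lem:normal_shifts} to each group together with its missing color, and get positivity and covering from an interior point of a basis cone and the closedness of the finite union. One slip: the generic set $G$ is not path-connected, since the hyperplane spans of $(n-1)$-selections disconnect it, so your opening plan is wrong as stated. Your argument never uses this, because the wall-crossing comparison you carry out is exactly what replaces connectedness.
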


\begin{proof}
If $\mathcal B$ is empty, then $f = 0$. In dimension one the
assertion follows by summing the positive and negative vectors
in the only color. Thus, assume $n\ge2$ and that a rainbow basis
exists. Let $\mathcal H$ be the finite family of distinct
hyperplanes spanned by facets of the cones $\poscone{B}$, with
$B \in \mathcal B$. Away from their union, $f$ is locally
constant.
Any two chambers of this finite hyperplane arrangement
can be joined by a polygonal path crossing one hyperplane at a
time and avoiding their pairwise intersections. It therefore   suffices to compare the values of $f$ on
opposite sides of one $H \in \mathcal H$ near a point $z \in H$
belonging to no other member of $\mathcal H$.
Let $\nu$ be a unit normal to $H$.

If $z$ is on the boundary of a basis cone, it lies in the relative
interior of exactly one of its facets, whose span is $H$.
Otherwise it would belong to two distinct facet spans.
All cones not having $z$ on their boundary have unchanged
membership under sufficiently small normal displacements of $z$.
Thus, the changing bases split into groups according to their
unique indexed facet containing $z$.

Fix one such facet $F = (f_i)_{i \in I}$, where
$I = \brackets{n}\setminus\braces{k}$. Its group consists
of all nonsingular completions $B_F(y)$ by $y \in Y_k$, with
columns ordered by color. Apply
\Href{Lemma}{lem:normal_shifts} to this facet and the missing
color $Y_k$. It shows that the total determinant weight of
this group is the same at $z+t\nu$ and $z-t\nu$ for sufficiently
small $t > 0$. There are finitely many groups, so one $t$
works for all of them. Summing their equalities gives
$f(z+t\nu) = f(z-t\nu)$. Every indexed basis is counted once,
even if several indexed facets generate the same cone.
This proves constancy on the generic set.

Evaluating $f$ at a generic point in the interior of one rainbow
basis cone shows that its common generic value is positive.
Every generic vector is therefore covered. Each basis cone is
closed, so their finite union is closed; since generic vectors
are dense, this union is all of $\R^n$.
\end{proof}

\subsection{Large rainbow bases}

The normalized case of \Href{Theorem}{thm:conic_completion}
uses a mixed discriminant inequality. The
\emph{mixed discriminant} $D(Q_1, \dots, Q_n)$ is the coefficient of
$t_1\dots t_n$ in $\det(\sum_{i \in \brackets{n}}t_iQ_i)$.
In particular, $D(\id_n, \dots, \id_n) = n!$.

\begin{prp}[Gurvits {\cite[Theorem~4.1]{gurvits2006van}}]
\label{prp:gurvits}
Let $Q_1, \dots, Q_n$ be positive semidefinite real symmetric
$n\times n$ matrices with
\[
 \sum_{i \in \brackets{n}}Q_i = \id_n,
 \qquad \iprod{Q_i}{\id_n} = 1
 \quad\text{for every }i \in \brackets{n}.
\]
Then
\[
 D(Q_1, \dots, Q_n) := 
 \frac{\partial^n}{\partial t_1\cdots\partial t_n}
 \det\parenth{\sum_{i \in \brackets{n}}t_iQ_i}
 \ge \frac{n!}{n^n}.
\]
\end{prp}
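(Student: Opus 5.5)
The plan is to follow Gurvits's capacity argument for hyperbolic (real stable) polynomials. Put
\[
 p(t_1, \dots, t_n) := \det\parenth{\sum_{i \in \brackets{n}}t_iQ_i},
\]
which is homogeneous of degree $n$. It has nonnegative coefficients, since these are mixed discriminants of positive semidefinite matrices. It is real stable, because a determinantal polynomial with positive semidefinite coefficient matrices has this property. Its coefficient of $t_1\cdots t_n$ is $D(Q_1, \dots, Q_n)$. Define the capacity
\[
 \mathrm{cap}(q) := \inf_{t \in (0,\infty)^k}\frac{q(t)}{t_1\cdots t_k}.
\]
The proof has two parts. First, the doubly stochastic hypotheses give $\mathrm{cap}(p) \ge 1$. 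Second, differentiating in one variable at a time loses at most an explicit factor, and the product of these factors is $n!/n^n$.

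\emph{Step 1 (capacity lower bound).} Fix $t \in (0,\infty)^n$ and put $M := \sum_i t_iQ_i$, which is positive definite. Let $e_1, \dots, e_n$ be an orthonormal eigenbasis of $M$. Then
\[
 \ln\det M = \sum_{k}\ln\iprod{Me_k}{e_k} = \sum_k\ln\sum_i w_{ik}t_i,
 \qquad w_{ik} := \iprod{Q_ie_k}{e_k} \ge 0.
\]
Since $\sum_iQ_i = \id_n$, we have $\sum_i w_{ik} = 1$ for each $k$. Concavity of $\ln$ then gives $\ln\det M \ge \sum_k\sum_i w_{ik}\ln t_i$. Moreover $\sum_k w_{ik} = \iprod{Q_i}{\id_n} = 1$, so the right-hand side equals $\sum_i \ln t_i$. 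Hence $p(t) \ge t_1\cdots t_n$, and therefore $\mathrm{cap}(p) \ge 1$.

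\emph{Step 2 (induction by differentiation).} Put $p_n := p$ and
\[
 p_{k-1}(t_1, \dots, t_{k-1}) := \frac{\partial p_k}{\partial t_k}\Big|_{t_k = 0}.
\]
Then $p_0 = D(Q_1, \dots, Q_n)$. Each $p_k$ is homogeneous of degree $k$ with nonnegative coefficients, and it remains real stable, because differentiation and the specialization $t_k = 0$ both preserve real stability. Consequently its degree in $t_k$ is at most $k$. The key inequality to establish is
\[
 \mathrm{cap}(p_{k-1}) \ge g(k)\,\mathrm{cap}(p_k),
 \qquad g(k) := \parenth{\frac{k-1}{k}}^{k-1}.
\]
To prove it, fix positive $t_1, \dots, t_{k-1}$. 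The univariate polynomial $R(s) := p_k(t_1, \dots, t_{k-1}, s)$ is real-rooted, with nonnegative coefficients and degree at most $k$, and it satisfies $R(s) \ge \mathrm{cap}(p_k)\, s\, t_1\cdots t_{k-1}$. So it suffices to prove the univariate lemma
\[
 R'(0) \ge g(k)\inf_{s>0}\frac{R(s)}{s}.
\]
Since $g$ is decreasing, it is enough to treat degree exactly $k$. All roots are nonpositive, so write $R(s) = c\prod_j(s+\rho_j)$ with $\rho_j \ge 0$. The case $R(0) = 0$ is trivial, because then $R'(0) = \lim_{s \to 0^+} R(s)/s$. Otherwise
\[
 \frac{R'(0)}{R(0)} = \sum_j\frac{1}{\rho_j},
\]
and the lemma follows by choosing $s = (k-1)^{-1}\sum_j 1/\rho_j$ and applying AM--GM to the factors $1 + s/\rho_j$, which shows that equal roots are the worst case.

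\emph{Conclusion.} Multiplying the inequalities of Step 2 over $k = n, n-1, \dots, 2$ (the step $k = 1$ is trivial) and using Step 1 gives
\[
 D(Q_1, \dots, Q_n) = p_0 \ge \prod_{k=2}^n\parenth{\frac{k-1}{k}}^{k-1} = \frac{n!}{n^n},
\]
where the product telescopes.

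I expect the main obstacle to be Step 2, namely verifying that real stability survives each differentiate-and-set-to-zero step, and proving the sharp univariate lemma through the AM--GM optimization. Step 1 is a short concavity computation.
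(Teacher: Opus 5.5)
The paper gives no proof of this proposition. It quotes Gurvits's Theorem~4.1 and uses it as a black box in Lemma~\ref{lem:large_rainbow_basis}. Your outline instead proves it directly, using the capacity argument for real stable polynomials, which Gurvits later used to reprove and extend Bapat's conjecture. It is a correct, self-contained route, given the standard facts you invoke. These are that $\det\bigl(\sum_i t_iQ_i\bigr)$ is real stable (here $\sum_i Q_i=\id_n$ makes the imaginary part positive definite), and that $\partial_{t_k}$ and real specializations preserve real stability or produce the zero polynomial. Compared with citing, your argument shows exactly where the doubly stochastic hypotheses enter: only in Step~1, and that eigenbasis/concavity computation is correct. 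Step~2 and the telescoping product hold for every positive semidefinite tuple and give $D(Q_1,\dots,Q_n)\ge \frac{n!}{n^n}\,\mathrm{cap}(p)$, which is more general than the statement.

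One computation is wrong as written. In the univariate lemma, the choice $s=(k-1)^{-1}\sum_j 1/\rho_j$ scales incorrectly: it has the units of $1/\rho$, not of $\rho$. It does not yield the bound. For $k=2$ and $R(s)=(s+1)^2$ it gives $s=2$ and $R(2)/2=9/2>4=R'(0)/g(2)$. The right choice is
$s=\frac{k}{k-1}\bigl(\sum_j \rho_j^{-1}\bigr)^{-1}$.
Then the factors $1+s/\rho_j$ have arithmetic mean $\frac{k}{k-1}$, so AM--GM gives $R(s)\le R(0)\bigl(\frac{k}{k-1}\bigr)^{k}$. Hence
$R(s)/s\le R(0)\bigl(\sum_j\rho_j^{-1}\bigr)\bigl(\frac{k}{k-1}\bigr)^{k-1}=R'(0)/g(k)$.
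For exact degree $d<k$, run the same computation with $d$ in place of $k$; when $d=1$, let $s\to\infty$ instead. Then use that $g$ is decreasing. With this correction the argument is complete.
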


Theorem~4.1 of the cited paper proves Bapat's conjecture for
doubly stochastic collections of positive semidefinite Hermitian
matrices. The real symmetric case above is included directly.

We next relate this operator inequality to the determinants of
colorful selections. The required identity is a coefficient form
of the Cauchy--Binet formula. For its isotropic probabilistic version, see
Pivovarov \cite[Lemma~3]{pivovarov2010determinants}; the
Cauchy--Binet formula for second-moment operators is also used
in the tight-frame approach of \cite{ivanovframes}.
We give the finite expansion explicitly.

\begin{prp}[A colorful form of the Cauchy--Binet formula]
\label{prp:colored_cauchy_binet}
Let $Y_i = \braces{y_{ij}:j \in \brackets{m_i}} \subset \R^n$
be finite indexed sets, and let $p_{ij} \ge 0$. Put
$Q_i := \sum_{j \in \brackets{m_i}}p_{ij}y_{ij}\otimes y_{ij}$ for
$i \in \brackets{n}$. Then
\begin{equation}
 D(Q_1, \dots, Q_n) = 
 \sum_{j_1 \in \brackets{m_1}}\dots\sum_{j_n \in \brackets{m_n}}
 \parenth{\prod_{i \in \brackets{n}}p_{i j_i}}
 \abs{\det[y_{1j_1}\ \dots\ y_{nj_n}]}^2.
 \label{eq:colored_cauchy_binet}
\end{equation}
\end{prp}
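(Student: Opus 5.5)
We prove \eqref{eq:colored_cauchy_binet} by expanding $\det(\sum_i t_iQ_i)$ with the classical Cauchy--Binet formula and then reading off the coefficient of $t_1\cdots t_n$.

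First, write $\sum_{i\in\brackets{n}}t_iQ_i = M T \transpose{M}$. Here $M$ is the $n\times K$ matrix whose columns are all the vectors $y_{ij}$, indexed by pairs $(i,j)$ with $K=\sum_i m_i$. The matrix $T$ is the $K\times K$ diagonal matrix with entries $t_ip_{ij}$. This identity is immediate from $Q_i=\sum_j p_{ij}y_{ij}\otimes y_{ij}$.

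Next, apply Cauchy--Binet to the product $(MT)\transpose{M}$. For an $n$-element subset $S$ of the index pairs, let $M_S$ denote the corresponding $n\times n$ submatrix. Then
\[
 \det\parenth{\sum_{i}t_iQ_i}=\sum_{S}\det\parenth{(MT)_S}\det M_S=\sum_{S}\parenth{\prod_{(i,j)\in S}t_ip_{ij}}\parenth{\det M_S}^2 .
\]
This holds because $(MT)_S=M_S T_S$ and $T_S$ is diagonal.

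Each term is a monomial $\prod_{(i,j)\in S}t_i$ times a constant. This monomial equals $t_1\cdots t_n$ exactly when $S$ contains precisely one pair with first index $i$ for every $i$. Such an $S$ is $\{(1,j_1),\dots,(n,j_n)\}$, a rainbow selection. All other subsets $S$ give monomials with a repeated or a missing variable, so they do not contribute to this coefficient.

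The mixed discriminant, defined as the coefficient of $t_1\cdots t_n$ (equivalently, the mixed partial derivative, since the polynomial is homogeneous of degree $n$), is therefore
\[
 \sum_{j_1,\dots,j_n}\parenth{\prod_i p_{ij_i}}\det[y_{1j_1}\ \dots\ y_{nj_n}]^2 .
\]
Squaring removes any dependence on the ordering of the columns, and $\det^2=\abs{\det}^2$. This gives \eqref{eq:colored_cauchy_binet}.

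The only point that needs care is indexing. Coincident vectors in different colors, or repeated labels within one color, must be kept as distinct columns of $M$. Cauchy--Binet then counts every indexed selection once, which matches the indexed-set convention of the paper. Singular selections contribute zero on both sides, so they need no separate treatment.
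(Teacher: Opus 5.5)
Your proposal is correct and follows the paper's proof: factor $\sum_i t_iQ_i$ through the matrix of all indexed vectors, expand by Cauchy--Binet, and note that only rainbow subsets produce the monomial $t_1\cdots t_n$. Your diagonal factor $T$ replaces the paper's columns $\sqrt{t_ip_{ij}}\,y_{ij}$, so you obtain the polynomial identity for all real $t$ directly rather than only for $t_i\ge 0$, which is a harmless simplification.
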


\begin{proof}
Index the columns by
$\mathcal I := \braces{(i,j):i \in \brackets{n},\,
j \in \brackets{m_i}}$, ordered lexicographically.
For $t_i \ge 0$, form the matrix whose $(i,j)$th column is
$\sqrt{t_i p_{ij}}\,y_{ij}$.
Its product with its transpose is $\sum_{i \in \brackets{n}}t_iQ_i$.
If $J \subset \mathcal I$ has $n$ elements, write $Y_J$
for the square matrix with columns $y_{ij}$ indexed by $J$.
The Cauchy--Binet formula \cite[Section~0.8.7]{horn2012matrix} gives
\[
 \det\parenth{\sum_{i \in \brackets{n}}t_iQ_i}
 = \sum_{\substack{J \subset \mathcal I\\\card{J} = n}}
 \abs{\det Y_J}^2\prod_{(i,j) \in J}t_i p_{ij}.
\]
The monomial $t_1\dots t_n$ occurs exactly when $J$ contains
one index from each color. Equating its coefficients gives
\eqref{eq:colored_cauchy_binet}.
\end{proof}
{We now deduce the existence of a rainbow basis with a
large determinant.}
\begin{lem}
\label{lem:large_rainbow_basis}
Let $Y_1, \dots, Y_n \subset \R^n$ be normalized colors, and
write $Y_i = \braces{y_{ij}:j \in \brackets{m_i}}$ for
$i \in \brackets{n}$. Then there are indices $j_i \in \brackets{m_i}$,
$i \in \brackets{n}$, such that
\[
 \abs{\det[y_{1j_1}\ \dots\ y_{nj_n}]} \ge \delta_n,
\]
with $\delta_n$ as in \eqref{eq:delta}.
\end{lem}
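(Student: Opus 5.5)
Take $m=n$ in \Href{Definition}{dfn:normalized_colors}. The conditions \eqref{eq:normalized_colors} then say that the second-moment operators $B_i = \frac{1}{m_i}\sum_{j}y_{ij}\otimes y_{ij}$ satisfy $\sum_{i\in\brackets{n}}B_i = \id_n$ and $\iprod{B_i}{\id_n}=1$. They are positive semidefinite, as sums of rank-one operators $y\otimes y$ with nonnegative weights. Hence the family $B_1,\dots,B_n$ is doubly stochastic, and \Href{Proposition}{prp:gurvits} gives
\[
 D(B_1,\dots,B_n) \ge \frac{n!}{n^n} = \delta_n^2 .
\]
Note that some $B_i$ may be singular; the proposition only requires positive semidefiniteness, so this causes no difficulty.

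Next we expand the mixed discriminant combinatorially. Apply \Href{Proposition}{prp:colored_cauchy_binet} with the uniform weights $p_{ij} := 1/m_i$, so that $Q_i = B_i$. Then
\[
 D(B_1,\dots,B_n) = \sum_{j_1\in\brackets{m_1}}\dots\sum_{j_n\in\brackets{m_n}}
 \parenth{\prod_{i\in\brackets{n}}\frac{1}{m_i}}\abs{\det[y_{1j_1}\ \dots\ y_{nj_n}]}^2 .
\]
The weights $\prod_i m_i^{-1}$ are nonnegative and sum to one over all rainbow selections, since $\sum_{j_i}1/m_i = 1$ for each $i$. So the right-hand side is the average of $\abs{\det}^2$ over uniformly random rainbow selections.

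An average is at most the maximum. Therefore some choice of indices $(j_1,\dots,j_n)$ satisfies
\[
 \abs{\det[y_{1j_1}\ \dots\ y_{nj_n}]}^2 \ge \delta_n^2 .
\]
Taking square roots gives $\abs{\det[y_{1j_1}\ \dots\ y_{nj_n}]} \ge \delta_n$.

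There is no real obstacle: the argument only assembles Gurvits's inequality with the colorful Cauchy--Binet identity, as the paper has set up. The one point to get right is the choice $p_{ij}=1/m_i$, which makes $Q_i$ equal the normalized $B_i$ and makes the weights a probability distribution. Centering of the colors is not used here.
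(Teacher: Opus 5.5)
Your proposal is correct and follows essentially the same route as the paper's proof. Both apply the colorful Cauchy--Binet formula with $p_{ij}=1/m_i$ to write $D(B_1,\dots,B_n)$ as the mean of the squared rainbow determinants, bound it below by $\delta_n^2$ using Gurvits's inequality, and pick a selection at least as large as the mean. Your remark that centering is not used here also matches the paper.
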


\begin{proof}
Let $B_i$ be the second-moment operator of $Y_i$ from
\Href{Definition}{dfn:normalized_colors}.
Use \Href{Proposition}{prp:colored_cauchy_binet} with
$p_{ij} = 1/m_i$, and then apply
\Href{Proposition}{prp:gurvits}. Using \eqref{eq:delta}, we obtain
\[
 \frac{1}{\prod_{i \in \brackets{n}}m_i}
 \sum_{j_1 \in \brackets{m_1}}\dots\sum_{j_n \in \brackets{m_n}}
 \abs{\det[y_{1j_1}\ \dots\ y_{nj_n}]}^2
 = D(B_1, \dots, B_n) \ge \delta_n^2.
\]
The left-hand side is the arithmetic mean of the squared
determinants of all colorful selections, so at least one has
the required size. The determinant estimate uses the second
moments; the balance assumption is needed for the conic
covering to which we apply it.
\end{proof}

\begin{proof}[Proof of \Href{Theorem}{thm:conic_completion}]

Evaluate the common generic value in \Href{Lemma}{lem:cone_density}
at a point in the interior of the given basis cone. That basis
contributes at least $D$, so, for every generic $u$,
\[
 \sum_{B \in \mathcal B(u)}\abs{\det B} \ge D.
\]
There are at most $N^n$ rainbow bases,
so one covering basis has determinant at least $D/N^n$.
Its coefficients are nonnegative. Cramer's rule and Hadamard's
inequality \cite[Section~0.8.3 and Corollary~7.8.3]{horn2012matrix}
give, for every $i \in \brackets{n}$,
\[
 a_i = \abs{(B^{-1}u)_i}
 \le \frac{\enorm{u}\prod_{\substack{j \in \brackets{n}\\j \ne i}}\enorm{r_j}}{\abs{\det B}}
 \le \frac{N^nR^{n-1}\enorm{u}}{D}.
\]

For an arbitrary $u$, approximate it by generic vectors and pass
to a subsequence on which the selected basis is fixed.
Its cone is closed, and its coefficients depend continuously
on $u$. Finally, for normalized colors,
\Href{Lemma}{lem:large_rainbow_basis} supplies
$D = \delta_n$ from \eqref{eq:delta}, and
\eqref{eq:normalized_colors} gives $R = \sqrt N$.
\end{proof}

\subsection{The lift and two disjoint bases}

Our lifting construction now selects $2n$ vectors which split
into two rainbow bases with controlled determinants.
Together with \Href{Theorem}{thm:conic_completion}, this supplies
the selection step for the colorful Steinitz theorem: keep a
large basis from one group and use the other group to represent
the opposite of its barycenter. The next section prepares the
original colors for this argument.

\begin{lem}
\label{lem:two_groups}
Let $Y_1, \dots, Y_{2n} \subset \R^n$ be normalized, with at most
$N$ vectors in each color. The colors admit a partition into
two groups of $n$ colors, each having a rainbow basis with
absolute determinant at least
\begin{equation}
 D_0(n,N) := \delta_{2n}N^{-n/2}.
 \label{eq:basis_scale}
\end{equation}
Here $\delta_{2n}$ is the constant in \eqref{eq:delta} for dimension $2n$.
\end{lem}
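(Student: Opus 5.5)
We lift the colors to $\R^{2n} = \R^n\times\R^n$ and apply the large-basis lemma there. For each $i \in \brackets{2n}$ and $y \in Y_i$, take the two lifted vectors $(y,0)$ and $(0,y)$, both carrying color $i$. This gives lifted colors $\tilde Y_i \subset \R^{2n}$ with $2\card{Y_i}$ vectors each. Their second-moment operators, averaged over $\tilde Y_i$, are
\[
 \tilde B_i = \tfrac12\diag(B_i,B_i).
\]
These have trace $\tfrac12\cdot 2\iprod{B_i}{\id_n} = 1$, and by \eqref{eq:normalized_colors}
\[
 \sum_{i \in \brackets{2n}}\tilde B_i = \tfrac12\diag(2\id_n,2\id_n) = \id_{2n}.
\]
So the family $\tilde B_1,\dots,\tilde B_{2n}$ is doubly stochastic on $\R^{2n}$, as remarked after Definition~\ref{dfn:normalized_colors}. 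Balance is not needed for this step.

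We then apply Proposition~\ref{prp:colored_cauchy_binet} in dimension $2n$ with weights $1/(2\card{Y_i})$, followed by Proposition~\ref{prp:gurvits}. Exactly as in Lemma~\ref{lem:large_rainbow_basis}, this produces a rainbow selection $\tilde y_1,\dots,\tilde y_{2n}$ from the lifted colors with
\[
 \abs{\det[\tilde y_1\ \dots\ \tilde y_{2n}]} \ge \delta_{2n}.
\]
Each $\tilde y_i$ is either $(y_i,0)$ or $(0,y_i)$ for some $y_i \in Y_i$. Let $I$ be the set of colors whose chosen vector lies in the first factor and $J$ its complement. The matrix is block diagonal up to a column permutation, so nonsingularity forces $\card{I} = \card{J} = n$. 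The determinant then factors as
\[
 \abs{\det[y_i]_{i \in I}}\cdot\abs{\det[y_j]_{j \in J}} \ge \delta_{2n}.
\]
This gives the partition into two groups, each with a rainbow basis.

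It remains to split the product bound into a bound for each factor; this is the main obstacle. Each $y \in Y_i$ satisfies $\enorm{y} \le \sqrt{N}$, because $\card{Y_i}^{-1}\enorm{y}^2 \le \iprod{B_i}{\id_n} = 1$. Hadamard's inequality therefore bounds each factor above by $N^{n/2}$. Dividing the product bound by the other factor gives each factor at least $\delta_{2n}N^{-n/2} = D_0(n,N)$, which is \eqref{eq:basis_scale}.
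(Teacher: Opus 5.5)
Your proposal is correct and follows the paper's proof: the same lift $(Y_i\times\{0\})\cup(\{0\}\times Y_i)$ with second moments $\tfrac12\diag(B_i,B_i)$, a large rainbow basis in $\R^{2n}$ via Cauchy--Binet and Gurvits, the block splitting into $n+n$ vectors, and Hadamard's bound $N^{n/2}$ on each factor. The only difference is that you invoke Propositions~\ref{prp:colored_cauchy_binet} and~\ref{prp:gurvits} directly, whereas the paper cites Lemma~\ref{lem:large_rainbow_basis}; this is immaterial, and you are right that balance plays no role in this step.
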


\begin{proof}
Lift every color to
\[
 \widehat Y_i := 
 (Y_i\times\braces{0})\cup(\braces{0}\times Y_i)
 \subset \R^{2n}.
\]
The two indexed copies retain their original color.
Their second-moment operator is
$\widehat B_i = \frac{1}{2}\diag(B_i,B_i)$, so these $2n$ colors
are normalized in $\R^{2n}$.
Apply \Href{Lemma}{lem:large_rainbow_basis} in dimension $2n$.
A lifted basis uses exactly $n$ vectors in each coordinate block,
and its determinant is the product of the two projected
determinants in absolute value. This product is at least
$\delta_{2n}$ from \eqref{eq:delta}. Each factor is at most
$N^{n/2}$ by Hadamard's
inequality \cite[Corollary~7.8.3]{horn2012matrix}, since the
projected vectors have norm at most $\sqrt N$. Thus, both factors
are at least $D_0(n,N)$.
\end{proof}

\section{The colorful quantitative Steinitz theorem}
\label{sec:centered_proof}

We now prove \Href{Theorem}{thm:ellipsoid_steinitz}.
To apply the preceding results, we first replace the original
colors by finite balanced collections and put them in the common
position of \Href{Lemma}{lem:normalization}. This preparation
loses explicit factors depending only on $d$, but produces
normalized vectors with bounded lengths and controlled second
moments.

The same strategy of passing to a more structured configuration
and controlling the return to the original points appears in
the work of Ivanov and Nasz\'odi \cite{ivanov2024steinitz}
and in the polarity argument of \cite{ivanov_QST_polarity_2025}.
Here the change within each color is a radial contraction,
followed by a scalar rescaling; the common linear map preserves
volume.

\begin{lem}[Radial balancing]
\label{lem:radial_balancing}
If $A\Bd \subset \conv X$, there are $N = 2d(d+1)$ indexed points
$x_j \in X$ and numbers $t_j \in [0,1]$ such that
\begin{equation}
 z_j := t_jx_j,
 \qquad \sum_{j \in \brackets{N}} z_j = 0,
 \qquad
 C := \frac{1}{N}\sum_{j \in \brackets{N}} z_j\otimes z_j
 \succeq \frac{A\transpose{A}}{d^2N^2}.
 \label{eq:radial_covariance}
\end{equation}
Repetitions and zero values of $t_j$ are allowed.
\end{lem}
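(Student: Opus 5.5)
The plan is to reduce to the case $A = \id_d$ by a linear change of variables, and then build the balanced collection from Carath\'eodory representations of the $2d$ vectors $\pm e_k$. For the reduction, put $X' := A^{-1}X$, so that $\Bd \subset \conv X'$. If $x'_j \in X'$ and $t_j \in [0,1]$ satisfy \eqref{eq:radial_covariance} with $A$ replaced by $\id_d$, set $x_j := Ax'_j \in X$ and keep the same $t_j$. Then $z_j = Az'_j$, so the sum of the $z_j$ is still zero, and
\[
 C = A\parenth{\frac{1}{N}\sum_{j \in \brackets{N}} z'_j\otimes z'_j}\transpose{A}
 \succeq \frac{A\transpose{A}}{d^2N^2},
\]
because $P \succeq Q$ implies $AP\transpose{A} \succeq AQ\transpose{A}$. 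Hence it suffices to treat $A = \id_d$.

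Assume $\Bd \subset \conv X$, and let $e_1, \dots, e_d$ be the standard basis. For each of the $2d$ vectors $\sigma e_k$, with $\sigma \in \braces{\pm 1}$, Carath\'eodory's theorem gives $d+1$ points $x_{k,\sigma,1}, \dots, x_{k,\sigma,d+1} \in X$ and convex weights $\lambda_{k,\sigma,l} \ge 0$ summing to one with
\[
 \sigma e_k = \sum_{l \in \brackets{d+1}}\lambda_{k,\sigma,l}\,x_{k,\sigma,l}.
\]
If fewer than $d+1$ points are needed, we pad with repeated points and zero weights. This produces exactly $N = 2d(d+1)$ indexed points. We take $t_{k,\sigma,l} := \lambda_{k,\sigma,l} \in [0,1]$ and $z_{k,\sigma,l} := \lambda_{k,\sigma,l}\,x_{k,\sigma,l}$. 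Within each group, the vectors $z_{k,\sigma,l}$ sum to $\sigma e_k$. The groups for $e_k$ and $-e_k$ therefore cancel, and the total sum is zero.

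For the covariance bound, fix a unit vector $v$. In each group the $d+1$ numbers $\iprod{z_{k,\sigma,l}}{v}$ sum to $\sigma\iprod{e_k}{v}$. By Cauchy--Schwarz, their squares sum to at least $\iprod{e_k}{v}^2/(d+1)$. Summing over both signs and all $k$ gives
\[
 \sum_{j \in \brackets{N}}\iprod{z_j}{v}^2 \ge \frac{2}{d+1}\sum_{k \in \brackets{d}}\iprod{e_k}{v}^2 = \frac{2}{d+1}.
\]
Thus $C \succeq \frac{2}{(d+1)N}\id_d$. Since $d^2N \ge d+1$, this is well above the required $\frac{1}{d^2N^2}\id_d$.

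None of the steps is a serious obstacle. The only points needing care are that the reduction to $A = \id_d$ is compatible with the order $\succeq$ (it is, by congruence with $A$), and that the padding with zero weights is permitted, which the statement explicitly allows.
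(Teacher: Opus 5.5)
Your proof is correct. The construction is the paper's: Carath\'eodory representations of the $2d$ vectors $\pm Ae_k$, padded to $N=2d(d+1)$ labels, with the convex weights as radial factors. Where you differ is the covariance estimate.

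The paper divides the weights by $2d$, so that $\sum_j t_j=1$ and $\sum_j t_jx_j=0$ is a single convex combination. It then argues dually. With $q_j:=\frac12(\alpha^+_{kj}-\alpha^-_{kj})_{k=1}^d$ it has $A^{T}v=\sum_j\langle x_j,v\rangle q_j$ and $|q_j|\le dt_j$. Minkowski's inequality followed by Cauchy--Schwarz over all $N$ terms then gives $AA^{T}\preceq d^2N^2C$.

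Your Cauchy--Schwarz inside each group of $d+1$ terms is simpler and sharper. It yields $C\succeq\frac{2}{(d+1)N}AA^{T}$, which beats the stated bound by a factor $4d^3$. A factor $4d^2$ comes just from not dividing the weights by $2d$; the remaining $d$ comes from the group-wise estimate. This would enlarge $s_0$ in the proof of Theorem~\ref{thm:ellipsoid_steinitz}, but it does not change the order $d^{-O(d^2)}$.

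The paper's identity also tolerates a common list in which one point serves several representations. Yours uses disjoint groups of labels, which costs nothing since repetitions are allowed.

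One minor point: your reduction to $A=\id_d$ uses $A^{-1}$, while the lemma does not state that $A$ is invertible (it is in every application). The reduction is unnecessary anyway. Running your group-wise Cauchy--Schwarz on representations of $\pm Ae_k$ directly gives $\sum_j\langle z_j,v\rangle^2\ge\frac{2}{d+1}|A^{T}v|^2$ for every $A$.
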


\begin{proof}
Represent each $\pm Ae_k$, $k \in \brackets{d}$, as a convex
combination of at most $d+1$ points of $X$. Collect these points
into an indexed list of length $N$, padding it if necessary:
\[
 \pm Ae_k = \sum_{j \in \brackets{N}}\alpha_{kj}^{\pm}x_j,
 \qquad \alpha_{kj}^{\pm}\ge0,
 \qquad \sum_{j \in \brackets{N}}\alpha_{kj}^{\pm} = 1.
\]
Put
\[
 t_j := \frac{1}{2d}\sum_{k \in \brackets{d}}
 \parenth{\alpha_{kj}^++\alpha_{kj}^-},
 \qquad
 q_j := \frac{1}{2}
 \parenth{\alpha_{kj}^+-\alpha_{kj}^-}_{k = 1}^{d}.
\]
Then $\sum_{j \in \brackets{N}}t_j = 1$,
$\sum_{j \in \brackets{N}}t_jx_j = 0$, and
$\enorm{q_j} \le d t_j$. For every $v \in \R^d$,
\[
 \transpose{A}v = \sum_{j \in \brackets{N}}\iprod{x_j}{v}q_j.
\]
Minkowski's inequality followed by the Cauchy--Schwarz inequality gives
\[
 \enorm{\transpose{A}v}
 \le d\sum_{j \in \brackets{N}}\abs{\iprod{t_jx_j}{v}}
 \le d\sqrt N\parenth{\sum_{j \in \brackets{N}}\iprod{z_j}{v}^2}^{1/2}.
\]
Squaring proves
$A\transpose{A} \preceq d^2N\sum_{j \in \brackets{N}}z_j\otimes z_j
 = d^2N^2C$.
\end{proof}

\begin{lem}
\label{lem:radial_monotonicity}
Let $x_1, \dots, x_m \in \R^d$ and
$y_j = a_jx_j$, where $0 \le a_j \le 1$ for $j \in \brackets{m}$.
If $0 \in \inter\conv\braces{y_j:j \in \brackets{m}}$, then
\[
 \conv\braces{y_j:j \in \brackets{m}}
 \subset \conv\braces{x_j:j \in \brackets{m}}.
\]
\end{lem}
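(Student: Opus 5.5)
We show that each vertex $y_j$ of $\conv\braces{y_j:j \in \brackets{m}}$ lies in $P := \conv\braces{x_j:j \in \brackets{m}}$. Since $P$ is convex, the whole hull of the $y_j$ is then contained in $P$. As $y_j = a_jx_j$ with $a_j \in [0,1]$, the point $y_j$ lies on the segment $[0,x_j]$. Because $x_j \in P$, it suffices to prove that $0 \in P$. Convexity of $P$ then gives $[0,x_j] \subset P$, so $y_j \in P$.

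To see that $0 \in P$, use the hypothesis $0 \in \inter\conv\braces{y_j}$. In particular $0 \in \conv\braces{y_j}$, so there are weights $\lambda_j \ge 0$ with $\sum_j\lambda_j = 1$ and $\sum_j\lambda_j a_j x_j = 0$. Put $\mu_j := \lambda_j a_j \ge 0$. If $S := \sum_j\mu_j > 0$, then $0 = \sum_j(\mu_j/S)x_j$ is a convex combination of the $x_j$, so $0 \in P$.

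The only thing to check is that $S > 0$. Suppose instead that $S = 0$. Then every $j$ with $\lambda_j > 0$ has $a_j = 0$, that is, $y_j = 0$. This alone is not a contradiction, so we argue differently and avoid this case altogether. The set $J := \braces{j : a_j > 0}$ is non-empty, since otherwise every $y_j = 0$ and $\conv\braces{y_j} = \braces{0}$ has empty interior. The points $y_j$ with $j \notin J$ are all zero. Therefore $\conv\braces{y_j} = \conv\parenth{\braces{0}\cup\braces{y_j:j \in J}}$.

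If $0$ lay outside $\conv\braces{y_j:j \in J}$, we could separate them: there would be a nonzero $v$ with $\iprod{v}{y_j} \ge 0$ for all $j \in J$. Then $\conv\braces{y_j}$ would lie in the halfspace $\braces{\iprod{v}{\cdot} \ge 0}$ while containing $0$ on its boundary, contradicting $0 \in \inter\conv\braces{y_j}$. Hence $0 \in \conv\braces{y_j:j \in J}$. Choosing the weights $\lambda_j$ supported on $J$ gives $S = \sum_{j \in J}\lambda_j a_j > 0$, because every $a_j > 0$ there.

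The only delicate point is this degenerate case of zero contraction factors. It is handled by restricting to $J$ and using that $0$ is an interior point, so that no halfspace through $0$ contains the hull. Everything else is the convexity argument above.
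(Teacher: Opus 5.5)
Your proof is correct and follows the same route as the paper. You write zero as a convex combination $0=\sum_j\lambda_j y_j$ with $\sum_j\lambda_j a_j>0$, conclude that $0\in\conv\braces{x_j}$, and then use $y_j\in[0,x_j]$. The only difference is how you ensure $\sum_j\lambda_j a_j>0$. The paper uses interiority to choose strictly positive weights on all points, by averaging representations of zero, so a single positive $a_j$ suffices. You instead restrict to $J=\braces{j:a_j>0}$ and use a separation argument, which is equally valid.
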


\begin{proof}
Choose strictly positive $b_j$ with $\sum_{j \in \brackets{m}} b_j = 1$ and
$\sum_{j \in \brackets{m}} b_jy_j = 0$. Such coefficients are obtained by averaging
representations of zero giving positive weight to each prescribed
point. At least one $a_j$ is positive, so
$\sum_{j \in \brackets{m}} b_ja_jx_j = 0$ with $\sum_{j \in \brackets{m}} b_ja_j > 0$. Thus,
$0 \in \conv\braces{x_j:j \in \brackets{m}}$, and each
$y_j \in [0,x_j]$ belongs
to this hull.
\end{proof}

The next inclusion turns positive balance into an inscribed ellipsoid.

\begin{lem}
\label{lem:balanced_ellipsoid}
Let $r_1, \dots, r_m \in \R^d$. If $\lambda_i \ge 0$ for
$i \in \brackets{m}$, $\sum_{i \in \brackets{m}}\lambda_i = 1$, and
$\sum_{i \in \brackets{m}}\lambda_i r_i = 0$, then
\[
 \frac{1}{\sqrt{2}}
 \parenth{\sum_{i \in \brackets{m}}\lambda_i^2 r_i\otimes r_i}^{1/2}\Bd
 \subset \conv\braces{r_i:i \in \brackets{m}}.
\]
\end{lem}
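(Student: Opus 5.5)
The plan is to compare support functions. Both sides of the claimed inclusion are compact convex sets; the left-hand side may be a degenerate ellipsoid if $M := \sum_{i \in \brackets{m}}\lambda_i^2 r_i\otimes r_i$ is singular. Hence it suffices to show that, for every $v \in \R^d$, the support function of the left side is at most that of the right side:
\[
 \frac{1}{\sqrt2}\enorm{M^{1/2}v} \le \max_{i \in \brackets{m}}\iprod{r_i}{v}.
\]
Here I use that the support function of $M^{1/2}\Bd$ at $v$ equals $\enorm{M^{1/2}v}$, since $M^{1/2}$ is symmetric. I also use that $\enorm{M^{1/2}v}^2 = \iprod{Mv}{v} = \sum_{i}\lambda_i^2\iprod{r_i}{v}^2$.

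Fix $v$ and put $a_i := \iprod{r_i}{v}$ and $h := \max_i a_i$. The balance condition gives $\sum_i \lambda_i a_i = 0$. Since the $\lambda_i$ are nonnegative and sum to one, this forces $h \ge 0$. The claim reduces to the scalar inequality
\[
 \sum_{i}\lambda_i^2 a_i^2 \le 2h^2 .
\]
I would split the $a_i$ into positive and negative parts, $a_i = a_i^+ - a_i^-$. Set $P := \sum_i\lambda_i a_i^+$. Balance gives $\sum_i \lambda_i a_i^- = P$ as well. Moreover $P \le h\sum_i\lambda_i \le h$, because each $a_i^+ \le h$.

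Now bound each part by the square of its sum, using that all terms are nonnegative:
\[
 \sum_i \lambda_i^2 (a_i^+)^2 \le \Bigl(\sum_i\lambda_i a_i^+\Bigr)^2 = P^2 \le h^2 ,
 \qquad
 \sum_i \lambda_i^2 (a_i^-)^2 \le P^2 \le h^2 .
\]
Adding the two bounds gives $\sum_i \lambda_i^2 a_i^2 \le 2h^2$, which is the required inequality.

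The only point needing care is the negative side. There the individual $a_i^-$ are not bounded by $h$, and the control comes solely from balance through the equality of the positive and negative weighted masses. This is exactly where the hypothesis $\sum_i\lambda_i r_i = 0$ enters. The factor $1/\sqrt2$ is the price of treating the two sides separately. Finally, the inequality of support functions for all $v$ gives the inclusion, by the standard characterization of inclusion of closed convex sets via support functions.
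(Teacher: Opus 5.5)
Your proposal is correct and follows essentially the same route as the paper: you compare support functions, split $\lambda_i\iprod{r_i}{v}$ into positive and negative parts, use balance to equate the two masses, and bound each sum of squares by the square of its sum. Like the paper, you also bound the positive mass by $\max_i\iprod{r_i}{v}$, which is nonnegative because the origin lies in the hull.
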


\begin{proof}
For $v \in \R^d$, put $b_i := \lambda_i\iprod{v}{r_i}$ and
$s := \sum_{\substack{i \in \brackets{m}\\b_i>0}}b_i$.
Since $\sum_{i \in \brackets{m}}b_i = 0$, the negative terms have total
absolute value $s$. The sum of squares of nonnegative numbers is
at most the square of their sum, so
\[
 \sum_{i \in \brackets{m}}b_i^2
 \le \parenth{\sum_{\substack{i \in \brackets{m}\\b_i > 0}}b_i}^2
      +\parenth{\sum_{\substack{i \in \brackets{m}\\b_i < 0}}(-b_i)}^2
 = 2s^2.
\]
Moreover, $\sum_{\substack{i \in \brackets{m}\\b_i > 0}}\lambda_i \le 1$ gives
$s \le \max_{i \in \brackets{m}}\iprod{v}{r_i}$.
The maximum is nonnegative because zero belongs to the hull.
Comparing support functions gives the inclusion.
\end{proof}

\begin{proof}[Proof of \Href{Theorem}{thm:ellipsoid_steinitz}]
Apply \Href{Lemma}{lem:radial_balancing} to each color with
$N = 2d(d+1)$, and write
$C_i := N^{-1}\sum_{j \in \brackets{N}}z_{ij}\otimes z_{ij}$.
Apply \Href{Lemma}{lem:normalization} to these operators.
With the resulting positive operator $H$, put
$s_i := \sqrt{\iprod{C_i}{H}}$ and
$y_{ij} := H^{1/2}z_{ij}/s_i$. These colors satisfy
\eqref{eq:normalized_colors}. From \eqref{eq:radial_covariance}
and the determinant assumption,
\[
 s_i \ge s_0 := \frac{1}{\sqrt d\,N}.
\]

Put $D_0 := D_0(d,N) = \delta_{2d}N^{-d/2}$, as in
\eqref{eq:basis_scale}, and use \Href{Lemma}{lem:two_groups}
to partition the colors into $I_1$ and $I_2$.
Keep a rainbow basis $B_1$ from $I_1$ with
$\abs{\det B_1} \ge D_0$, and denote its vectors by $r_i$,
$i \in I_1$. Put
\[
 v := \frac{1}{d}\sum_{i \in I_1}r_i,
 \qquad \enorm{v} \le \sqrt N.
\]
Apply \Href{Theorem}{thm:conic_completion} to the colors in $I_2$
and the vector $-v$, with $R = \sqrt N$ and $D = D_0$.
We obtain vectors $r_i$, $i \in I_2$, with
\[
 -v = \sum_{i \in I_2}a_i r_i,
 \qquad 0 \le a_i \le \frac{N^{d/2}}{D_1},
 \qquad D_1 := \frac{D_0}{N^d}.
\]
Set $a_i := 1/d$ for $i \in I_1$. Then
\[
 \sum_{i \in \brackets{2d}}a_i r_i = 0,
 \qquad
 A_* := \sum_{i \in \brackets{2d}}a_i
 \le 1+\frac{dN^{d/2}}{D_1}
 \le \frac{2dN^{d/2}}{D_1}.
\]
For the last inequality, note that $D_1 \le N^{d/2}$.
The normalized weights $\lambda_i := a_i/A_*$ therefore satisfy
\[
 \lambda_i \ge \ell := \frac{D_1}{2d^2N^{d/2}}
 \qquad\text{for every }i \in I_1.
\]
No lower bound for the weights in $I_2$ is needed. Indeed, for
$M_\lambda := \sum_{i \in \brackets{2d}}\lambda_i^2r_i\otimes r_i$,
the first basis gives
\[
 M_\lambda \succeq \ell^2 B_1\transpose{B_1},
 \qquad \sqrt{\det M_\lambda} \ge \ell^d D_0,
\]
where the determinant estimate follows from monotonicity in the
Loewner order \cite[Corollary~7.7.4(c),(e)]{horn2012matrix}.
Equivalently, for decreasingly ordered eigenvalues and singular
values, $\lambda_j(M_\lambda) \ge \ell^2\sigma_j(B_1)^2$ for
$j \in \brackets{d}$, and one multiplies these inequalities.
\Href{Lemma}{lem:balanced_ellipsoid} now puts an origin-centered
ellipsoid of volume at least $\kappa_d 2^{-d/2}\ell^dD_0$
inside the normalized rainbow hull.

Let $x_i$ be the corresponding original points and $t_i$ their
radial factors. Then
\[
 s_0r_i = \frac{s_0t_i}{s_i}H^{1/2}x_i,
 \qquad 0 \le \frac{s_0t_i}{s_i} \le 1.
\]
The normalized hull has zero in its interior.
\Href{Lemma}{lem:radial_monotonicity} and $\det H^{1/2} = 1$ therefore
return an origin-centered ellipsoid in the original rainbow hull,
whose defining operator has absolute determinant at least
\[
 c_d := 
 2^{-d/2}s_0^d\ell^d D_0
 = 
 \frac{\delta_{2d}^{d+1}}
 {2^{3d/2}d^{5d/2}N^{2d^2+3d/2}}.
\]
By \eqref{eq:delta}, $\delta_{2d} \ge e^{-d}$.
Since $N = 2d(d+1)$, we obtain
$c_d = d^{-O(d^2)}$.
\end{proof}

\subsection{Ellipsoids with different centers}

The following question remains open.

\begin{conj}
\label{conj:different_centers}
For every $d\ge1$ there is $\widetilde c_d>0$ with the following
property. Let $X_1, \dots, X_{2d} \subset \R^d$, and suppose that,
for every $i \in \brackets{2d}$, there are $c_i\in\R^d$ and an
invertible operator $A_i$ such that
\[
 c_i+A_i\Bd \subset \conv X_i,
 \qquad \abs{\det A_i} \ge 1.
\]
Then there are a rainbow set $R$, a point
$c \in C := \conv\braces{c_i:i \in \brackets{2d}}$, and an
invertible operator $A$ such that
\[
 c+A\Bd \subset \conv R,
 \qquad \abs{\det A} \ge \widetilde c_d.
\]
\end{conj}

\section{The colorful quantitative Helly theorem}
\label{sec:affine_proof}

We prove the Helly theorem by solving the following finite
halfspace problem. The general-position assumption ensures
that all the interpolation points used in the affine conic
identity are well defined.

\begin{thm}[The reduced problem]
\label{thm:reduced}
For each $i \in \brackets{2d}$, let
\[
 K_i = \bigcap_{j \in \brackets{n_i}}H_{ij},
 \qquad
 H_{ij} = \braces{x \in \R^d \st \iprod{\nu_{ij}}{x} \le b_{ij}},
 \qquad n_i \le 2d,
\]
be a bounded polytope with non-empty interior and volume at most one.
Assume that every $d$ distinct indexed defining normals, taken
from all colors, are linearly independent. Then there are $H_i \in 
\braces{H_{ij}:j \in \brackets{n_i}}$ such that
\[
 \vol{d}\parenth{\bigcap_{i \in \brackets{2d}}H_i} \le A_d,
 \qquad A_d = d^{O(d^2)}.
\]
\end{thm}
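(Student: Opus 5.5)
We plan to prove the reduced problem by dualizing the affine version of the argument used for \Href{Theorem}{thm:ellipsoid_steinitz}. The halfspaces are encoded by their normals together with their heights.

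\textbf{Step 1: balanced normals.} For each color, put $w_{ij} := \vol{d-1}(\text{facet } j \text{ of } K_i)\,\nu_{ij}$. Minkowski's identity gives $\sum_j w_{ij} = 0$, so each color is balanced. Moreover $\sum_j \iprod{w_{ij}}{x}\,(b_{ij}-\iprod{\nu_{ij}}{x}) \cdot \frac1d = \vol{d}K_i$ for every $x$: this is the pyramid decomposition, and it is independent of $x$ because the normals are balanced. The second moment $C_i := \frac{1}{n_i}\sum_j w_{ij}\otimes w_{ij}$ should have determinant bounded below by a power of $\vol{d}K_i$. Indeed, take the John ellipsoid $E_i = c_i + A_i\Bd$ of $K_i$. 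Then each facet has area at least that of a cross-section of the inscribed ellipsoid scaled appropriately, and the height $b_{ij}-\iprod{\nu_{ij}}{c_i}$ is comparable to the width of $E_i$ in direction $\nu_{ij}$. Since $n_i \le 2d$, this gives $\det C_i \ge d^{-O(d)}\,(\vol{d}K_i)^{2(d-1)}$ up to the scaling convention. We rescale each color so that $\vol{d}K_i$ is exactly comparable to one; volume at most one only helps after the final inversion, so we may normalize.

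\textbf{Step 2: common position and the lift with heights.} Apply \Href{Lemma}{lem:normalization} to the $C_i$ to obtain $H$, and set $y_{ij} := H^{1/2}w_{ij}/s_i$. Replacing $K_i$ by $H^{-1/2}K_i$ preserves volume, since $\det H=1$. The colors $Y_i$ are then normalized and have at most $2d$ vectors each. Record the heights by $h_{ij} := b_{ij}\vol{d-1}(F_{ij})/s_i$, after translating so that $c_i$ plays no role. The key affine identity is that $\sum_j (y_{ij}, h_{ij})$ has first block $0$ and last coordinate $d\,\vol{d}K_i/s_i$. This last coordinate is independent of translations of $K_i$, which is exactly the ``average height'' invariance mentioned in \Href{Section}{sec:strategy}.

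Next, use \Href{Lemma}{lem:two_groups} to split the colors into groups $I_1, I_2$ with large rainbow bases. As in the centered proof, keep the basis from $I_1$ and use \Href{Theorem}{thm:conic_completion} on $I_2$ to represent $-v$. This yields weights $a_i \ge 0$ with $\sum a_i y_i = 0$, $a_i = 1/d$ on $I_1$, and $a_i \le d^{O(d)}$ on $I_2$. The main obstacle is controlling the weighted height sum $\sum_i a_i h_i$: we need it to be positive and at most $d^{O(d^2)}$. My first attempt is to choose the point represented in the lift to $\R^{2d+1}$, normalizing the height coordinate so that the target lies on the hyperplane $\{\text{height sum} = 1\}$. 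We then apply the conic covering (\Href{Lemma}{lem:cone_density}) in that $2d$-dimensional affine slice. The general-position assumption guarantees that the interpolation points are well defined, and the balanced-height identity ensures that the covering density is governed by the volumes $\vol{d}K_i$ rather than by translations.

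\textbf{Step 3: polarity.} Suppose we have a positive balance $\sum_i \lambda_i \nu_i' = 0$ with $\sum_i \lambda_i b_i' \le \beta$ and $\det(\sum_i \lambda_i^2 \nu_i'\otimes\nu_i')$ bounded below. Then the selected halfspaces $H_i$ have bounded intersection $P$. Evaluating $\sum_i \lambda_i(b_i - \iprod{\nu_i}{x}) = \sum_i \lambda_i b_i$ shows that every $x \in P$ satisfies $0 \le \lambda_i(b_i-\iprod{\nu_i}{x}) \le \beta$. Hence $\abs{\iprod{\lambda_i\nu_i}{x-x_0}} \le \beta$ for each $i$, so $P - x_0$ lies in the polar of the ellipsoid from \Href{Lemma}{lem:balanced_ellipsoid}, dilated by $\beta$. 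This bounds $\vol{d}P$ by $\beta^d\kappa_d^2\,2^{d/2}/(\kappa_d\sqrt{\det M_\lambda})$. Undoing the factors $s_i \ge d^{-O(d)}$ and the facet-area scaling gives $A_d = d^{O(d^2)}$.
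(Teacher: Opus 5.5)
Your Step~2 stops where the real difficulty lies, and the proposed resolution does not work. Fixing a rainbow basis of $I_1$ for its determinant alone, putting $a_i=1/d$ there, and completing $-v$ from $I_2$ by \Href{Theorem}{thm:conic_completion} controls $\det M$. It says nothing about $L=\sum_i a_i h_{r_i}$.

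After normalization, only the color averages $\beta_i$ of the heights are bounded. The height of an individual normal shifts by $\iprod{y}{c}$ when its color is translated by $c$. The colors cannot be translated independently, since that changes the rainbow intersections, so ``translating so that $c_i$ plays no role'' is not available. A selection made without looking at heights may therefore pick normals with very large heights.

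The lift to $\R^{2d+1}$ does not repair this. \Href{Lemma}{lem:cone_density} needs $n$ balanced colors in $\R^n$. The lifted colors are $2d$ colors in $\R^{2d+1}$, so there are no rainbow bases, and they are not balanced in the height coordinate, whose color sums are positive. Moreover, a covering statement would only give positive representations, not an upper bound on the height sum. So the translation invariance of $\sum_j h_{ij}$, which you correctly identify, is never turned into control of a single rainbow selection.

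The missing ingredient is \Href{Lemma}{lem:affine_conic_identity}. If the heights sum to zero in each color, then $G_h(u)=\sum_{B\in\mathcal B(u)}\abs{\det B}\iprod{u}{p_B}$ is linear in the generic direction $u$. This follows from the facet-crossing argument of \Href{Lemma}{lem:cone_density}, with general position making the vertices $p_B$ well defined. The paper uses it as follows:
\begin{itemize}
\item Applied to $h_y-\beta_i$, it shows that the determinant-weighted average strip width over all pairs in $\mathcal B_1(u)\times\mathcal B_2(-u)$ equals a uniformly bounded term plus $\iprod{v}{u}$, for one fixed $v$.
\item One takes $u=\pm u_0$, with $u_0$ separated from all facet spans, and chooses the sign so that $\iprod{v}{u}\le 0$.
\item Once empty rainbow intersections are dismissed, all widths are nonnegative. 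A Markov-type exclusion then yields one pair with $L\le 2K_0$ and $\abs{\det B_1}\ge D_0/(4N^d)$.
\item Separation bounds all coefficients from below, and Cauchy--Binet bounds $\det M$.
\end{itemize}
The bases are thus chosen jointly with the direction, not by fixing $B_1$ first.

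Two smaller points. Your bound on $\det C_i$ is true, but the justification is not: facets can be arbitrarily small. The paper instead uses the isoperimetric inequality after mapping to $\sum_j z_j\otimes z_j=\id_d$. In Step~3, the separate bounds $\abs{\lambda_i\iprod{\nu_i}{x-x_0}}\le\beta$ only give $P-x_0\subset\sqrt{2d}\,\beta M_\lambda^{-1/2}\Bd$. This is harmless for $d^{O(d^2)}$, and \Href{Lemma}{lem:dual_ellipsoid} gives the sharper inclusion.
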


Section~\ref{sec:polyhedral_reduction} justifies this reduction. We then describe the duality with the Steinitz
construction, first with a common center and then using
balanced normals together with their heights.

\subsection{Justifying the polyhedral reduction}
\label{sec:polyhedral_reduction}

The reduction uses outer approximation by polytopes and small
perturbations of finitely many facet normals. For the standard
approximation results, see Schneider \cite{schneider2014convex}.
Every auxiliary halfspace retains the label of an original set
which it contains.

{We use the following form of the quantitative volume
Helly theorem, proved by Brazitikos \cite{brazitikos2017brascamp}
and recovered by Almendra-Hern\'andez, Ambrus, and Kendall
\cite[Theorem~4]{almendra2022quantitative}.}

\begin{prp}
\label{prp:one_color_volume}
There is an absolute constant $C > 0$ such that, with
$Q_d := (Cd)^{3d/2}$, every finite family of halfspaces
whose intersection is a bounded convex body of volume at most one
has a subfamily of at most $2d$ members whose intersection has
volume at most $Q_d$.
\end{prp}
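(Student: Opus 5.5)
This is the one-color volume theorem of Brazitikos, and in the paper it is simply quoted. If I were to prove it, I would use the same single-color scheme that underlies the proof of \Href{Theorem}{thm:ellipsoid_steinitz}: John's theorem, followed by polarity and Santal\'o-type volume comparisons.

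\emph{Normalization.} Since volume ratios are affine invariant, I would apply an affine map putting the body $K$ in John position. Then $\Bd$ is its maximal-volume inscribed ellipsoid, and John's theorem supplies contact points $u_1, \dots, u_M \in \Sd \cap \partial K$ and weights $c_j > 0$ with
\[
 \sum_j c_j u_j \otimes u_j = \id_d, \qquad \sum_j c_j u_j = 0 .
\]
Because $K$ is an intersection of finitely many halfspaces and $\Bd \subset K$ touches $\partial K$ at $u_j$, each $u_j$ lies on a facet. The supporting halfspace of that facet is $\braces{x \st \iprod{u_j}{x} \le 1}$, so it is one of the given halfspaces. Moreover $\vol{d}K \ge \kappa_d$, and $K \subset d\Bd$.

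\emph{Selection of $2d$ contact points.} The contact points form one balanced, isotropic configuration. I would treat it as $d$ identical colors, giving each point the weight $c_j$ instead of a uniform weight. By \Href{Proposition}{prp:colored_cauchy_binet} and the identity $D(\id_d,\dots,\id_d)/d^d = \delta_d^2$ (up to the factor $d^{-d}$ coming from $\sum_j c_j = d$), some $d$ contact points form a basis $B_1$ with $\abs{\det B_1}$ bounded below by $e^{-O(d)}$. Next I would represent $-v$, where $v$ is the barycenter of $B_1$, positively by $d$ further contact points with controlled coefficients. This uses \Href{Lemma}{lem:cone_density} with weights, in the manner of \Href{Theorem}{thm:conic_completion}. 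The result is a rainbow set $R$ of $2d$ contact points with a positive balance $\lambda$, in which every point of $B_1$ has weight at least some $\ell$. By \Href{Lemma}{lem:balanced_ellipsoid}, $\conv R$ contains the origin-centered ellipsoid $E = \frac{1}{\sqrt2}M_\lambda^{1/2}\Bd$, and $\sqrt{\det M_\lambda} \ge \ell^d\abs{\det B_1}$.

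\emph{Polarity.} The selected halfspaces intersect in $(\conv R)^\circ \subset E^\circ$. Hence
\[
 \vol{d}\Bigl(\bigcap_{u \in R}\braces{\iprod{u}{x}\le1}\Bigr) \le \vol{d}E^\circ = \kappa_d^2/\vol{d}E,
\]
and dividing by $\vol{d}K \ge \kappa_d$ gives a bound of the form $d^{O(d)}$.

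\emph{Main obstacle.} The hard part is the sharp exponent $(Cd)^{3d/2}$. The weighted covering step in the second paragraph loses a factor $N^d$ from counting bases, which is too crude here. To recover the sharp constant I would follow Brazitikos and replace that counting by the Brascamp--Lieb inequality. Concretely, I would choose the $2d$ points by a randomized or weighted selection whose expected squared determinant is computed exactly by Cauchy--Binet. I would then bound the polar volume through the geometric Brascamp--Lieb inequality applied to the restricted John decomposition. Alternatively, one could follow the sparse-approximation route of Almendra-Hern\'andez, Ambrus, and Kendall.
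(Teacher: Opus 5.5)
The paper does not prove this proposition either: it quotes it from Brazitikos and from Almendra-Hern\'andez, Ambrus, and Kendall, as your first sentence says. The self-contained argument you outline, however, does not reach the stated bound, and it falls further short than your last paragraph suggests.

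In your selection step, the basis covering $-v$ is only guaranteed to have $\abs{\det B}\ge D/N^d$. Here $N$ is the number of contact points, which can be taken of order $d^2$, and $D\ge d^{-O(d)}$. Cramer's rule then bounds its coefficients only by $d^{O(d)}$. After normalizing the balance, the weight $\ell$ on the points of $B_1$ is therefore only $d^{-O(d)}$. The estimate $\sqrt{\det M_\lambda}\ge\ell^d\abs{\det B_1}$ raises this loss to the power $d$. So you get $\vol{d}E\ge d^{-O(d^2)}$, and polarity yields a volume ratio $d^{O(d^2)}$, not $d^{O(d)}$ as you claim. This is the same mechanism that limits \Href{Theorem}{thm:ellipsoid_steinitz} to $c_d=d^{-O(d^2)}$.

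There is also a smaller issue with non-uniform John weights. Evaluating the weighted covering sum near $B_1$ gives only $w(B_1)\abs{\det B_1}$, where $w(B_1)$ is a product of the weights $c_j/d$ and may be tiny. Choosing $B_1$ to maximize $w(B)\abs{\det B}^2$ repairs this, at the cost of another counting factor.

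Thus your outline only recovers a bound of the B\'ar\'any--Katchalski--Pach order. Along this polarity route, even $d^{O(d)}$ requires a selection whose coefficients stay polynomially bounded. That is, you need a one-color quantitative Steinitz theorem with a polynomial radius, as in \cite{ivanov2024steinitz}. The specific bound $(Cd)^{3d/2}$ comes from the Brascamp--Lieb and sparse-approximation arguments of the cited works. Your last paragraph names these tools but supplies none of their content. As a proof, the proposal is incomplete exactly at the quantitative estimate the proposition asserts; as a citation, it coincides with what the paper does.
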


The usual approximation and limiting argument gives the following
reduction.

\begin{lem}[Reduction to the polyhedral problem]
\label{lem:reduction}
\Href{Theorem}{thm:reduced} implies
\Href{Theorem}{thm:colorful_volume_helly} with
$\gamma_d = (Q_d A_d)^{-1}$.
\end{lem}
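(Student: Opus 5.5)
We argue in the selection form. Let $\mathcal F_1,\dots,\mathcal F_{2d}$ be finite families with $K_i:=\bigcap_{F\in\mathcal F_i}F$ of volume less than $\gamma_d=(Q_dA_d)^{-1}$ for every $i$. We must produce a rainbow selection with intersection of volume less than one. After a common dilation it suffices to treat the case $\vol{d}K_i\le 1/(Q_dA_d)$. The empty and lower-dimensional cases are handled first. If some $K_i$ has empty interior, then it is contained in a hyperplane or is empty. We outer-approximate $K_i$ by a thin bounded polytope of arbitrarily small volume, using finitely many halfspaces. Each such halfspace contains some member of $\mathcal F_i$: by compactness and the separation theorem, a closed halfspace containing a finite intersection of convex sets can be slightly enlarged so that, after passing to closures, it contains one of them. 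More precisely, we replace each $F\in\mathcal F_i$ by a large polytope containing $F\cap$(a big box). This reduces everything to the case in which every member of every family is a halfspace, or a finite intersection of halfspaces. Each such halfspace carries the label of the original set containing it. Any rainbow selection of halfspaces then has intersection containing the corresponding rainbow intersection of the original sets, so an upper bound for the former is an upper bound for the latter.

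So assume each $\mathcal F_i$ is a finite family of closed halfspaces, and that $K_i$ is a bounded polytope. We reach this by intersecting with a huge cube, whose facets we attach to every color; its facets lie far away and do not affect the final small intersections. Apply \Href{Proposition}{prp:one_color_volume} to each color after rescaling by $(\vol{d}K_i)^{-1/d}$. This yields at most $2d$ halfspaces $H_{i1},\dots,H_{in_i}$ from $\mathcal F_i$ whose intersection $K_i'$ is bounded with $\vol{d}K_i'\le Q_d\vol{d}K_i\le A_d^{-1}$. Rescale space by the factor $A_d^{1/d}$. Then each $K_i'$ is a bounded polytope with non-empty interior and volume at most one. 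If $K_i'$ has empty interior, shift the heights slightly outward, which costs arbitrarily little volume.

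Next comes the general-position hypothesis. Perturb the finitely many normals $\nu_{ij}$ slightly, and adjust the heights outward so that each perturbed halfspace still contains the intersection with the big cube of the original one. The result is that every $d$ distinct indexed normals become linearly independent, which is an open dense condition, and all volumes change by a factor of at most $1+\varepsilon$. Boundedness is an open condition and survives the perturbation. \Href{Theorem}{thm:reduced} then gives a rainbow selection $H_i$ with $\vol{d}\bigcap H_i\le A_d(1+\varepsilon)$ in the rescaled picture. Undoing the scaling gives volume at most $(1+\varepsilon)$ times the original scale. Since the original sets are contained in the labelled halfspaces, the original rainbow intersection has volume at most this quantity, and we let $\varepsilon\to0$.

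The main obstacle is bookkeeping. First, the perturbations must keep each auxiliary halfspace containing its labelled original set. This is why we pass through a large bounding cube and move heights only outward, and why the strict and non-strict inequalities need care in the limit. Second, we must make sure the monochromatic intersections remain bounded with non-empty interior after every modification.
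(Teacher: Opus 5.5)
Your outline is the paper's: selection form, labelled outer polytopes, Proposition~\ref{prp:one_color_volume} to cut each color to at most $2d$ halfspaces, perturbation into general position, rescaling, and Theorem~\ref{thm:reduced}. However, the bookkeeping is the entire content of this lemma, and two of your steps fail as written.

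First, in the lower-dimensional case you approximate $K_i$ itself by a thin polytope and assert that each of its halfspaces can be slightly enlarged to contain a member of $\mathcal F_i$. This is false. For the unit disks $F_1,F_2$ centered at $(\mp1,0)$ one has $K_i=\braces{0}$, and $\braces{x:x_2\le\delta}$ contains $K_i$ but neither disk for any $\delta<1$. A supporting halfspace of an intersection is in general a positive combination of supporting halfspaces of the members, not one of them. The labels must instead come from enlarging and approximating each (compact) member separately. Take polytopes $P_F$ with $F+t\Bd\subset P_F\subset F+(t+\varepsilon)\Bd$; all their facet halfspaces contain $F$. Here $t$ is slightly larger than the smallest $t^*\ge0$ with $\bigcap_F(F+t^*\Bd)\neq\emptyset$. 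Then $\bigcap_F P_F$ has non-empty interior, and its volume is close to $\vol{d}\bigcap_F(F+t^*\Bd)$. That volume is $\vol{d}K_i$ if $t^*=0$, and zero if $t^*>0$, since a ball inside it would allow $t^*$ to be decreased. Your ``More precisely'' sentence goes in this direction, but a ``large'' polytope containing $F\cap Q$ controls neither the volume nor the interior. The case $K_i=\emptyset$ is not covered by a thin polytope at all.

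Second, the cube $Q$ destroys the labels. When the members are unbounded, its facet halfspaces, attached to every color, contain no member of $\mathcal F_i$. Nothing prevents Proposition~\ref{prp:one_color_volume} or Theorem~\ref{thm:reduced} from selecting them. Likewise, a halfspace containing only $F\cap Q$, or a perturbed halfspace containing only $H_{ij}\cap Q$, need not contain $F$. So your concluding claim that ``the original sets are contained in the labelled halfspaces'' fails. The remark that the facets of $Q$ ``do not affect the final small intersections'' is exactly what needs proof.

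The missing step is the paper's limiting argument:
\begin{itemize}
\item Run everything for the compact members $\overline F\cap R\Bd$, so that labels refer to these truncated sets.
\item For each $R$, obtain a rainbow selection whose truncated intersection $\bigcap_i(\overline{F_i}\cap R\Bd)$ satisfies the desired bound.
\item Since there are finitely many rainbow selections, fix one selection along a sequence $R\to\infty$.
\item Conclude by continuity of volume from below, together with $\vol{d}(\overline F\setminus F)=0$ for convex $F$.
\end{itemize}

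Once the members are compact and lie inside $Q$, your own perturbation device is a valid alternative to the paper's limit of perturbations with Fatou's lemma. You perturb the normals and move the heights so that $\tilde H_{ij}\supset H_{ij}\cap Q\supset F$. This keeps the labels exact at every stage, so no passage to the limit in the perturbation is needed.
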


\begin{proof}
We prove the selection form. First suppose that all sets are
non-empty and compact, and that every monochromatic intersection
has volume at most one. By enlarging the sets if necessary and
approximating them from outside by polytopes, we obtain in each
color a finite family of labeled halfspaces whose intersection
has non-empty interior and volume at most $1+\varepsilon$.
\Href{Proposition}{prp:one_color_volume} reduces each family to
at most $2d$ halfspaces, with intersection volume at most
$Q_d(1+\varepsilon)$.

Perturb the finitely many normals into general position. The
monochromatic intersections remain bounded with non-empty
interior, and their volumes converge to the original volumes.
After a common rescaling, apply \Href{Theorem}{thm:reduced}.
Let the perturbations tend to zero along a subsequence with the
same selected labels. Fatou's lemma bounds the resulting rainbow
intersection by $A_dQ_d(1+\varepsilon)$. Its parent sets have no
larger intersection. Let $\varepsilon$ decrease to zero.
For arbitrary convex sets, first pass to closures and truncate
by expanding balls, then use continuity of volume from below
and the finiteness of the original rainbow selections. An empty
member already gives the desired selection.
\end{proof}

\subsection{Notation and the dual construction}
\label{sec:dual_construction}
\label{sec:halfspace_duality}

The use of translations in the dual construction follows the
polarity approach of \cite{ivanov_QST_polarity_2025}, where the
center is chosen so that the polar vertices are balanced.
Here, we keep track of the change of center through the heights
of the defining halfspaces.

We now introduce the notation used in the proof of the reduced
theorem, \Href{Theorem}{thm:reduced}. First we describe halfspaces
and polarity, then define the weighted facet data and the common
normalization. The resulting normal--height pairs will be used
throughout the proof.

For $y \in \R^d\setminus\braces{0}$ and $h \in \R$, write
\[
 \mathcal H(y,h) :=
 \braces{x \in \R^d:\iprod{y}{x} \le h}.
\]
We call $h$ the \emph{height} of the inequality with normal $y$.
Its signed Euclidean offset is $h/\enorm{y}$. Multiplying
$y$ and $h$ by the same positive scalar leaves the halfspace
unchanged. Translation and an invertible linear map $S$ give
\begin{equation}
 \mathcal H(y,h)+c = \mathcal H(y,h+\iprod{y}{c}),
 \qquad S\mathcal H(y,h) = \mathcal H(S^{-T}y,h).
 \label{eq:halfspace_changes}
\end{equation}

If $K = \bigcap_{j \in \brackets{m}}\mathcal H(y_j,h_j)$ is a bounded
polytope and $p \in \inter K$, put
\begin{equation}
 h_j(p) := h_j-\iprod{y_j}{p} > 0,
 \qquad v_j(p) := \frac{y_j}{h_j(p)},
 \qquad (K-p)^\circ = \conv\braces{v_j(p):j \in \brackets{m}}.
 \label{eq:polar_vertices}
\end{equation}
Here $L^\circ := \braces{v:\iprod{v}{x} \le 1\text{ for all }x \in L}$
denotes polarity about zero. Formula~\eqref{eq:polar_vertices}
follows by writing the inequalities of $K-p$ with right-hand
side one. The choice of $p$ enters only through the denominators.

\begin{dfn}
\label{dfn:facet_data}
Let $K$ be a bounded polytope with non-empty interior. Write its
irredundant facet description as
\[
 K = \bigcap_{j \in \brackets{m}}\mathcal H(\nu_j,b_j),
 \qquad \enorm{\nu_j} = 1.
\]
Let $F_j$ be the facet with outer normal $\nu_j$, and put
$f_j := \vol{d-1}F_j$, $V := \vol{d}K$, and
$\mathcal A(K) := \sum_{j \in \brackets{m}}f_j$.
Define its weighted facet data by
\begin{equation}
 \mu_j := \frac{f_j}{dV},
 \qquad z_j := \mu_j\nu_j,
 \qquad q_j := \mu_j b_j,
 \qquad C := \frac{1}{m}\sum_{j \in \brackets{m}}z_j\otimes z_j.
 \label{eq:facet_data}
\end{equation}
Thus, $K = \bigcap_{j \in \brackets{m}}\mathcal H(z_j,q_j)$.
The operator $C$ is positive definite because the facet normals
span $\R^d$. For $p \in \inter K$, we also use the weights
\begin{equation}
 \theta_j(p) := q_j-\iprod{z_j}{p}
 = \mu_j\parenth{b_j-\iprod{\nu_j}{p}} > 0.
 \label{eq:facet_radial_weights}
\end{equation}
In these coordinates, $v_j(p) = z_j/\theta_j(p)$.
\end{dfn}

\begin{dfn}
\label{dfn:reduced_construction}
Let $K_1, \dots, K_{2d}$ be bounded polytopes with non-empty
interior, at most $N$ facets each, and volumes $V_i \le 1$.
Attach the color index $i$ to the notation of
\Href{Definition}{dfn:facet_data}, writing $n_i$ for the number
of facets of $K_i$. Thus, for $i \in \brackets{2d}$ and
$j \in \brackets{n_i}$, we have
\[
 \mu_{ij} = \frac{f_{ij}}{dV_i},
 \qquad (z_{ij},q_{ij}) = \mu_{ij}(\nu_{ij},b_{ij}),
 \qquad C_i = \frac{1}{n_i}\sum_{j \in \brackets{n_i}}z_{ij}\otimes z_{ij}.
\]
Choose the positive definite operator $H$ with $\det H = 1$
given by \Href{Lemma}{lem:normalization} for
$C_1, \dots, C_{2d}$, and put
\begin{equation}
 s_i := \sqrt{\iprod{C_i}{H}},
 \qquad y_{ij} := \frac{H^{1/2}z_{ij}}{s_i},
 \qquad h_{ij} := \frac{q_{ij}}{s_i}.
 \label{eq:normalized_dual_pairs}
\end{equation}
We write
\[
 Y_i := \braces{y_{ij}:j \in \brackets{n_i}},
 \qquad \beta_i := \frac{1}{n_i}\sum_{j \in \brackets{n_i}}h_{ij}.
\]
All these sets are indexed. The normal $y_{ij}$ retains the
height $h_{ij}$ and the label of its original halfspace.

The common coordinate change is $x' := H^{-1/2}x$.
It preserves volume and gives
\[
 K_i' := H^{-1/2}K_i
 = \bigcap_{j \in \brackets{n_i}}\mathcal H(y_{ij},h_{ij}).
\]
\end{dfn}

For the polytopes in \Href{Theorem}{thm:reduced}, discard redundant
inequalities and rescale the remaining normals to unit length.
Then \Href{Definition}{dfn:reduced_construction} applies with
$N = 2d$. Every retained inequality keeps its original color and
label. The lemmas below establish the quantitative properties of
this construction.

The formula for $K_i'$ follows because
$\iprod{z_{ij}}{x} \le q_{ij}$ becomes
$\iprod{H^{1/2}z_{ij}}{x'} \le q_{ij}$; we then divide by $s_i$.
The construction is therefore
\[
 (\nu_{ij},b_{ij})
 \longmapsto (z_{ij},q_{ij}) = \mu_{ij}(\nu_{ij},b_{ij})
 \longmapsto (y_{ij},h_{ij})
 = \frac{1}{s_i}(H^{1/2}z_{ij},q_{ij}).
\]
For $p_i \in \inter K_i$ and $p_i' := H^{-1/2}p_i$, its effect
on the polar vertices is
\[
 \frac{y_{ij}}{h_{ij}-\iprod{y_{ij}}{p_i'}}
 = H^{1/2}\frac{z_{ij}}{q_{ij}-\iprod{z_{ij}}{p_i}}
 = H^{1/2}v_{ij}(p_i).
\]
The factor $s_i$ cancels in the polar vertex. We keep it in the
normal--height pair to normalize the second moments.

\subsection{Minkowski identities and volume estimates}
\label{sec:dual_normalization}

Minkowski's equilibrium identity and the signed pyramid-volume
formula give, in the notation of \Href{Definition}{dfn:facet_data},
\begin{equation}
 \sum_{j \in \brackets{m}}f_j\nu_j = 0,
 \qquad \sum_{j \in \brackets{m}}f_jb_j = dV.
 \label{eq:facet_identities}
\end{equation}
For the surface-area measure and its equilibrium condition, see
\cite[Sections~10.1 and~18.2]{GruberBook}.
The second identity holds
for every position of the origin, even when some $b_j$ are negative.
The classical isoperimetric inequality gives
\begin{equation}
 \mathcal A(K) \ge d\kappa_d^{1/d}V^{(d-1)/d};
 \label{eq:facet_isoperimetry}
\end{equation}
see \cite[Section~8.3, Theorem~8.7]{GruberBook}.

Under an invertible linear map $S$, the facet data satisfy
\[
 \nu_j' = \frac{S^{-T}\nu_j}
 {\enorm{S^{-T}\nu_j}},
 \qquad
 f_j' = \abs{\det S}\enorm{S^{-T}\nu_j}f_j.
\]
For the second formula, compare a pyramid with base $F_j$ and
its image: the volume changes by $\abs{\det S}$ and the height
by $1/\enorm{S^{-T}\nu_j}$. Consequently,
\begin{equation}
 f_j'\nu_j' = \abs{\det S}S^{-T}(f_j\nu_j).
 \label{eq:facet_transform}
\end{equation}

\begin{lem}
\label{lem:facet_weights}
For the weighted facet data of \Href{Definition}{dfn:facet_data},
\begin{equation}
 \sum_{j \in \brackets{m}}z_j = 0,
 \qquad \sum_{j \in \brackets{m}}q_j = 1,
 \qquad
 \sqrt{\det C} \ge \frac{\kappa_d}{m^d d^{d/2}V}.
 \label{eq:facet_balance}
\end{equation}
Translating $K$ by $c$ leaves $z_j$ unchanged and replaces $q_j$
by $q_j+\iprod{z_j}{c}$.
\end{lem}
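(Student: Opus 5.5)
The first two identities in \eqref{eq:facet_balance} come straight from Minkowski's identities \eqref{eq:facet_identities} after dividing by $dV$. Since $z_j = f_j\nu_j/(dV)$, we get $\sum_j z_j = (dV)^{-1}\sum_j f_j\nu_j = 0$. Similarly, $\sum_j q_j = (dV)^{-1}\sum_j f_jb_j = 1$. The translation statement is also immediate. The facets of $K+c$ have the same normals and areas, and the volume is unchanged, so $\mu_j$ and $z_j$ do not change. The new height of facet $j$ is $b_j+\iprod{\nu_j}{c}$, by \eqref{eq:halfspace_changes}. Multiplying by $\mu_j$ gives $q_j+\iprod{z_j}{c}$.

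The real content is the determinant bound. The plan is to reduce to the case where $C$ is a multiple of the identity, using the transformation rule \eqref{eq:facet_transform}, and then apply the isoperimetric inequality \eqref{eq:facet_isoperimetry}. Both sides of the bound transform the same way under $K\mapsto SK$. Indeed, $f_j'\nu_j' = \abs{\det S}S^{-T}(f_j\nu_j)$ and $V' = \abs{\det S}V$, so $z_j' = S^{-T}z_j$. Hence $C' = S^{-T}C S^{-1}$ and $\sqrt{\det C'} = \sqrt{\det C}/\abs{\det S}$. The right-hand side $\kappa_d/(m^dd^{d/2}V)$ scales by the same factor $1/\abs{\det S}$. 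So we may choose $S$ with $C' = \lambda\id_d$, say $S = (C/\lambda)^{1/2}$, and it suffices to prove $\lambda^{d/2}\ge \kappa_d/(m^dd^{d/2}V)$ in that position.

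In isotropic position we have $d\lambda = \tr C = \frac1m\sum_j\enorm{z_j}^2$. Cauchy--Schwarz gives $\sum_j\enorm{z_j}^2 \ge \frac1m\parenth{\sum_j\enorm{z_j}}^2$. Since $\sum_j\enorm{z_j} = \mathcal A(K)/(dV)$, we obtain
\[
 \lambda \ge \frac{\mathcal A(K)^2}{m^2d^3V^2}.
\]
Applying \eqref{eq:facet_isoperimetry} then yields $\lambda \ge \kappa_d^{2/d}V^{-2/d}/(m^2d)$. Raising this to the power $d/2$ gives $\lambda^{d/2}\ge \kappa_d/(m^dd^{d/2}V)$, which is exactly the claimed bound.

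The only delicate step is the invariance check. One has to verify that the weighted normals transform linearly, as $z_j\mapsto S^{-T}z_j$, without an extra scalar factor. This holds because the factor $\abs{\det S}$ in the facet areas cancels the factor in the volume. Once that is confirmed, the rest is Cauchy--Schwarz together with isoperimetry.
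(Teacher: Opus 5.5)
Your proof is correct and follows essentially the same route as the paper: both apply a linear map to $K$ with $z_j' = S^{-T}z_j$ from \eqref{eq:facet_transform}, and then combine Cauchy--Schwarz for $\sum_j\enorm{z_j'}$ with the isoperimetric inequality \eqref{eq:facet_isoperimetry}; the paper takes $S = M^{1/2}$ with $M = \sum_j z_j\otimes z_j$, so that $\sum_j z_j'\otimes z_j' = \id_d$. The only difference is cosmetic: you observe that the inequality is invariant under linear maps and prove it in isotropic position, whereas the paper bounds $V' = V\sqrt{\det M}$ from below and returns to $C$ via $\sqrt{\det C} = m^{-d/2}\sqrt{\det M}$.
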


\begin{proof}
Divide \eqref{eq:facet_identities} by $dV$ to obtain the two
balance identities. The translation rule follows from
\eqref{eq:halfspace_changes}.

Put $M := \sum_{j \in \brackets{m}}z_j\otimes z_j$ and map $K$ by
$S := M^{1/2}$. By \eqref{eq:facet_transform}, the normalized
facet vectors of $K' := SK$ are
$z_j' = S^{-T}z_j$. Hence
$\sum_{j \in \brackets{m}}z_j'\otimes z_j' = \id_d$ and
$\sum_{j \in \brackets{m}}\enorm{z_j'}^2 = d$.
Writing $V' := \vol{d}K' = V\sqrt{\det M}$, the Cauchy--Schwarz inequality gives
\[
 \mathcal A(K') = dV'\sum_{j \in \brackets{m}}\enorm{z_j'}
 \le dV'\sqrt m\parenth{\sum_{j \in \brackets{m}}\enorm{z_j'}^2}^{1/2}
 = dV'\sqrt{md}.
\]
Together with \eqref{eq:facet_isoperimetry}, this yields
$V' \ge \kappa_d/(md)^{d/2}$. Since
$\sqrt{\det C} = m^{-d/2}\sqrt{\det M}$, the bound follows.
\end{proof}

The weights in \eqref{eq:facet_radial_weights} satisfy
$\sum_{j \in \brackets{m}}\theta_j(p) = 1$. In fact, $\theta_j(p)$ is
the volume of the pyramid with apex $p$ and base $F_j$, divided
by $V$. Thus,
\[
 z_j = \theta_j(p)v_j(p),
 \qquad 0 < \theta_j(p) \le 1,
 \qquad \sum_{j \in \brackets{m}}\theta_j(p)v_j(p) = 0.
\]
These are balanced radial contractions of the polar vertices,
just as in \Href{Lemma}{lem:radial_balancing}. Changing the pole
changes the vertices and their contraction factors, but leaves
the vectors $z_j$ unchanged. The preceding determinant estimate
is the dual counterpart of the second-moment estimate in that lemma.

\begin{lem}[Normalized halfspaces]
\label{lem:normalized_halfspaces}
For the polytopes and normal--height pairs of
\Href{Definition}{dfn:reduced_construction}, the colors
$Y_1, \dots, Y_{2d}$ are normalized in the sense of
\Href{Definition}{dfn:normalized_colors}. For every
$i \in \brackets{2d}$, their average heights satisfy
\begin{equation}
 0 < \beta_i \le \kappa_d^{-1/d}.
 \label{eq:height_average}
\end{equation}
The construction preserves general position of the normals.
\end{lem}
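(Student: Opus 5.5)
Three things need checking: that the colors are normalized in the sense of \Href{Definition}{dfn:normalized_colors}, the bounds \eqref{eq:height_average} on the average heights, and preservation of general position. Each follows from the construction in \Href{Definition}{dfn:reduced_construction} together with \Href{Lemma}{lem:facet_weights} and \Href{Lemma}{lem:normalization}.

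\emph{Normalization.} By \eqref{eq:facet_balance}, $\sum_{j}z_{ij}=0$ for every color $i$. Applying the linear map $H^{1/2}/s_i$ preserves this, so $\sum_j y_{ij}=0$. The operators $C_i$ are positive definite, as noted after \eqref{eq:facet_data}, so \Href{Lemma}{lem:normalization} applies to them. The second-moment operator of $Y_i$ is
\[
 B_i=\frac{1}{n_i}\sum_{j}y_{ij}\otimes y_{ij}=\frac{H^{1/2}C_iH^{1/2}}{\iprod{C_i}{H}} .
\]
Its trace equals $\iprod{C_i}{H}/\iprod{C_i}{H}=1$. Conjugating \eqref{eq:positive_normalization} by $H^{1/2}$, with $n=d$ and $m=2d$, gives $\sum_i B_i=2\id_d$. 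This is exactly the computation displayed after \Href{Lemma}{lem:normalization}.

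\emph{Height bounds.} By \eqref{eq:facet_balance}, $\sum_j q_{ij}=1$, and hence
\[
 \beta_i=\frac{1}{n_i s_i}\sum_j q_{ij}=\frac{1}{n_i s_i}>0 .
\]
For the upper bound, combine \Href{Lemma}{lem:normalization} with the determinant estimate in \eqref{eq:facet_balance}:
\[
 s_i^2=\iprod{C_i}{H}\ge d(\det C_i)^{1/d}\ge d\left(\frac{\kappa_d}{n_i^d d^{d/2}V_i}\right)^{2/d}=\frac{\kappa_d^{2/d}}{n_i^2V_i^{2/d}} .
\]
Since $V_i\le1$, this gives $s_i\ge \kappa_d^{1/d}/n_i$. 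Therefore $\beta_i=1/(n_is_i)\le\kappa_d^{-1/d}$.

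\emph{General position.} The assumption in \Href{Theorem}{thm:reduced} is that any $d$ distinct indexed unit normals $\nu_{ij}$ are linearly independent. Each $y_{ij}$ equals $(\mu_{ij}/s_i)H^{1/2}\nu_{ij}$. The scalar $\mu_{ij}/s_i$ is positive: $f_{ij}>0$ because each inequality is an irredundant facet, and $s_i>0$. Nonzero rescalings of individual vectors followed by the invertible map $H^{1/2}$ preserve linear independence of any $d$-tuple.

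The only step that is not immediate is the upper bound on $\beta_i$. There the factor $n_i$ and the power of $d$ coming from \Href{Lemma}{lem:facet_weights} must cancel exactly against the factor $d$ in \Href{Lemma}{lem:normalization}. The computation above confirms that they do, with $V_i\le1$ used once at the end.
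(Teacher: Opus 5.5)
Your proposal is correct and follows the paper's proof: normalization comes from the construction after \Href{Lemma}{lem:normalization}, the identity $\beta_i = 1/(n_i s_i)$ comes from $\sum_j q_{ij} = 1$, the bound $s_i \ge \kappa_d^{1/d}/(n_i V_i^{1/d})$ comes from the trace estimate of that lemma combined with the determinant bound in \eqref{eq:facet_balance}, and general position survives positive rescalings followed by the invertible map $H^{1/2}$. You simply write out in full the computations that the paper compresses into a few lines.
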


\begin{proof}
Normalization follows from the construction after
\Href{Lemma}{lem:normalization}. By that lemma and
\eqref{eq:facet_balance},
\[
 s_i \ge \sqrt d\parenth{\det C_i}^{1/(2d)}
 \ge \frac{\kappa_d^{1/d}}{n_i V_i^{1/d}},
 \qquad \beta_i = \frac{1}{n_i s_i} \le \kappa_d^{-1/d}.
\]
All rescalings are positive and the common normal map $H^{1/2}$
is invertible, so linear independence is preserved.
\end{proof}

\subsection{The idea of the proof: centers and heights}
\label{sec:helly_idea}

Suppose first that the John ellipsoids of the monochromatic
intersections share a center $p$. John's theorem
\cite{John} gives
$E_i \subset K_i \subset p+d(E_i-p)$; see also
\cite[Section~11.1]{GruberBook}. Translate $p$ to zero and
use the polar vertices from \eqref{eq:polar_vertices}. Their
colors $X_i$ satisfy
\[
 \conv X_i = K_i^\circ \supset \frac{1}{d}E_i^\circ.
\]
Since $\vol{d}E_i \le 1$ and
$(\vol{d}E_i)(\vol{d}E_i^\circ) = \kappa_d^2$, each dual
color contains an origin-centered ellipsoid of volume at least
$\kappa_d^2/d^d$. By \Href{Theorem}{thm:ellipsoid_steinitz}
and a common rescaling, a rainbow hull contains an ellipsoid
of volume at least $c_d\kappa_d^2/d^d$. Taking polars bounds
the corresponding rainbow intersection by $d^d/c_d$.

For different centers, we use the normal--height pairs of
\Href{Definition}{dfn:reduced_construction}. If some rainbow
intersection is empty, the desired selection is immediate.
We therefore assume that all rainbow intersections are non-empty.
The facet weights balance the normals, and the common positive
operator normalizes their second moments. The volume assumption
becomes the average-height bound \eqref{eq:height_average}.
Translating color $i$ by $c_i$ changes its heights to
$h_y+\iprod{y}{c_i}$, but leaves their average unchanged because
$\sum_{y \in Y_i}y = 0$. Individual heights may have either sign
and need not be bounded.

We partition the colors into two groups. For a unit direction
$u$, we choose a covering rainbow basis from the first group
and a basis covering $-u$ from the second. Their conic coefficients give
\[
 u = \sum_{i \in I_1}a_i r_i,
 \qquad -u = \sum_{i \in I_2}a_i r_i.
\]
The selected halfspaces contain their intersection in a strip
orthogonal to $u$. Its width is
\[
 L := \sum_{i \in I_1}a_i h_{r_i}
       +\sum_{i \in I_2}a_i h_{r_i}.
\]
Indeed, the first sum is the upper supporting level in direction
$u$ for the first group, and the negative of the second sum is
the lower supporting level for the second group. Small $L$ has
this concrete geometric meaning: the rainbow intersection is
contained in a narrow strip. These levels need not be attained
in the full rainbow intersection, so its width may be smaller.

Alas, the strip alone does not control volume. We also need the selected
weighted normals to span all directions quantitatively. This is
measured by the determinant of
\[
 M := \sum_{i \in \brackets{2d}}a_i^2r_i\otimes r_i.
\]
The dual ellipsoid lemma converts a bound on $L$ and a lower
bound on $\det M$ into a volume bound. Thus, the coefficients
$a_i$, chosen together with the rainbow, must control both the
heights and the geometry of the normals.

To choose the direction, we take a determinant-weighted average
of these strip widths over all pairs of covering bases. Denote
it by $\overline L(u)$; its finite-sum definition is given below in
\eqref{eq:mean_strip_width}. Here the direction $u$ is fixed,
and the two bases vary independently within their respective
groups. The determinant weights have a total mass independent
of $u$, by the conic covering identity.

We compare the given heights with a second assignment: every
normal of color $i$ receives the constant height $\beta_i$.
For this assignment the average strip width
$\overline L_\beta(u)$ is uniformly bounded. The difference
between the two height assignments sums to zero in each color,
so the affine conic identity gives
\[
 \overline L(u) = \overline L_\beta(u)+\iprod{v}{u}
\]
for one fixed vector $v$, independent of the direction. Thus,
the part of the heights not controlled by their color averages
contributes only a linear function of $u$. This includes the
effect of independent translations of the colors.

The spherical-zone estimate leaves a line separated from every
facet span. Choose its orientation so that $\iprod{v}{u} \le 0$.
The average strip width is then small, and separation keeps all
conic coefficients away from zero. No bound on the size of $v$
is needed: reversing the direction changes the sign of its
contribution without affecting separation.

We still have to choose one pair of bases. Nonnegativity of all
strip widths implies that pairs with large width carry at most
half of the total weight. Pairs whose first basis has a small
determinant carry at most a quarter of the weight. A pair remains
after both exclusions. Its width is small, its first basis has
a large determinant, and all its conic coefficients have the
lower bound supplied by separation. The Cauchy--Binet formula
then gives the required lower bound for $\det M$, and the dual
ellipsoid lemma bounds the volume of the rainbow intersection.

\subsection{The dual of the balanced ellipsoid lemma}
\label{sec:dual_ellipsoid}

For a finite selection of normal--height pairs $(r_i,h_i)$,
$i \in \brackets{m}$, and weights $a_i \ge 0$ satisfying
$\sum_{i \in \brackets{m}}a_i r_i = 0$, put
\begin{equation}
 P := \bigcap_{i \in \brackets{m}}\mathcal H(r_i,h_i),
 \qquad M := \sum_{i \in \brackets{m}}a_i^2r_i\otimes r_i,
 \qquad L := \sum_{i \in \brackets{m}}a_i h_i.
 \label{eq:selected_dual_data}
\end{equation}
The balance relation makes $L$ independent of the choice of
origin. More concretely, for any $p \in P$,
\[
 L = \sum_{i \in \brackets{m}}a_i
 \parenth{h_i-\iprod{r_i}{p}}.
\]
Thus, $L$ is also the weighted sum of the nonnegative slacks of
the selected inequalities at $p$. The operator $M$ measures
their normals in all directions, through
\[
 \iprod{Mx}{x} = \sum_{i \in \brackets{m}}a_i^2\iprod{r_i}{x}^2.
\]

When $M \succ 0$ and $L > 0$, the volume functional is
\begin{equation}
 \Phi := \frac{\sqrt{\det M}}{L^d}.
 \label{eq:helly_functional}
\end{equation}
It is unchanged by a common scaling of the weights or a common
translation of the halfspaces. The next lemma is the polar
form of \Href{Lemma}{lem:balanced_ellipsoid}: the inner quadratic
ellipsoid becomes an outer ellipsoid for the intersection.

\begin{lem}[The dual balanced ellipsoid]
\label{lem:dual_ellipsoid}
For the data in \eqref{eq:selected_dual_data}, assume that $P$
is non-empty and $M$ is positive definite. Then $L \ge 0$ and
\begin{equation}
 \vol{d}P \le
 \frac{2^{d/2}\kappa_d L^d}{\sqrt{\det M}}.
 \label{eq:dual_ellipsoid_volume}
\end{equation}
If $P$ has non-empty interior, then $L > 0$ and, for every
$p \in \inter P$,
\[
 P \subset p+\sqrt{2}L M^{-1/2}\Bd.
\]
\end{lem}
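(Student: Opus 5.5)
We prove the inclusion first and deduce the volume bound from it. Both statements are invariant under a common translation of the halfspaces, which replaces $h_i$ by $h_i-\iprod{r_i}{p}$ and leaves $L$ unchanged by the balance relation. So we may take $p\in P$ (respectively $p\in\inter P$) to be the origin. Then every slack $h_i-\iprod{r_i}{0}=h_i$ is nonnegative, and $L=\sum_{i}a_ih_i\ge0$ immediately.

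For the inclusion, fix $x\in P$ and set $c_i:=a_i\iprod{r_i}{x}$. Balance gives $\sum_i c_i=0$. Membership $x\in P$ gives $\iprod{r_i}{x}\le h_i$, so every positive $c_i$ satisfies $c_i\le a_ih_i$. Therefore
\[
 s:=\sum_{c_i>0}c_i\le\sum_i a_ih_i=L .
\]
Here we use $h_i\ge0$ to bound the sum over the subset by the full sum. The negative terms also have total absolute value $s$. As in the proof of \Href{Lemma}{lem:balanced_ellipsoid}, the sum of squares of nonnegative numbers is at most the square of their sum, so
\[
 \iprod{Mx}{x}=\sum_i c_i^2\le 2s^2\le 2L^2 .
\]
Hence $\enorm{M^{1/2}x}\le\sqrt2L$, that is, $x\in\sqrt2LM^{-1/2}\Bd$. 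Translating back gives $P\subset p+\sqrt2LM^{-1/2}\Bd$.

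Now suppose $P$ has non-empty interior. If $L=0$, the inclusion with $p\in\inter P$ would force $P=\braces{p}$, contradicting the non-empty interior; hence $L>0$. Taking volumes in the inclusion gives
\[
 \vol{d}P\le\kappa_d\parenth{\sqrt2L}^d(\det M)^{-1/2},
\]
which is \eqref{eq:dual_ellipsoid_volume}.

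It remains to treat $P$ non-empty with empty interior. Then $\vol{d}P=0$ and \eqref{eq:dual_ellipsoid_volume} holds trivially. We only need $L\ge0$, which the first paragraph already gives with $p\in P$.

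The only delicate point is the step $s\le L$. It relies on the nonnegativity of all slacks at the chosen center, so that dropping the terms with $c_i\le0$ from $\sum_ia_ih_i$ can only decrease the sum. Placing the origin at a point of $P$ secures exactly this, and the balance relation keeps $L$ invariant under that move.
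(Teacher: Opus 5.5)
Your proof is correct, but it takes a different route from the paper. The paper fixes $p\in\inter P$ and forms the polar points $v_i=r_i/\tau_i$, where $\tau_i=h_i-\iprod{r_i}{p}>0$, with weights $\lambda_i=a_i\tau_i/L$. It checks that $\sum_i\lambda_i^2v_i\otimes v_i=M/L^2$, applies \Href{Lemma}{lem:balanced_ellipsoid} to get $\frac{1}{\sqrt2L}M^{1/2}\Bd\subset(P-p)^\circ=\conv\braces{v_i}$, and then takes polars. You stay in the primal and bound $\iprod{Mx}{x}$ directly for $x\in P-p$. The core inequality is the same: balance together with the fact that a sum of squares of nonnegative numbers is at most the square of their sum. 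Your computation is the paper's support-function comparison evaluated only at points $x\in P-p$, where $\max_i\iprod{v_i}{x}\le1$ and the paper's $b_i$ equal your $c_i/L$. What your version buys: it needs only nonnegative slacks, so the inclusion holds for every $p\in P$, boundary points included. The volume bound then follows without a case split. You also avoid the polarity step, including the need to identify $(P-p)^\circ$ with $\conv\braces{v_i}$ before boundedness of $P$ is known. What the paper's version buys is structural. It presents the lemma literally as the polar form of the Steinitz-side \Href{Lemma}{lem:balanced_ellipsoid}, and it shows that the weighted quadratic operator of the polar points does not depend on the pole. This fits the duality theme that drives the whole argument.
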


\begin{proof}
For $x \in P$, summing $a_i\iprod{r_i}{x} \le a_i h_i$
over $i \in \brackets{m}$ gives $0 \le L$.
If $P$ has empty interior, its volume is zero and the volume
bound follows. Otherwise fix $p \in \inter P$ and set
\[
 \tau_i := h_i-\iprod{r_i}{p} > 0,
 \qquad v_i := \frac{r_i}{\tau_i},
 \qquad \lambda_i := \frac{a_i\tau_i}{L}.
\]
Here $L = \sum_{i \in \brackets{m}}a_i\tau_i > 0$. The polar points
$v_i$ satisfy
\[
 \sum_{i \in \brackets{m}}\lambda_i = 1,
 \qquad \sum_{i \in \brackets{m}}\lambda_i v_i = 0,
 \qquad \sum_{i \in \brackets{m}}\lambda_i^2v_i\otimes v_i = \frac{M}{L^2}.
\]
The slacks $\tau_i$ cancel in the last identity. Although the
polar points depend on $p$, their weighted quadratic operator
depends only on the selected normals, heights, and coefficients.
The  positive balance also puts zero in the convex hull of the
$v_i$. Thus, the polar of $P-p$ is that convex hull, even before
boundedness of $P$ has been established. By
\Href{Lemma}{lem:balanced_ellipsoid},
\[
 \frac{1}{\sqrt{2}L}M^{1/2}\Bd
 \subset \conv\braces{v_i:i \in \brackets{m}} = (P-p)^\circ.
\]
Taking polars gives the asserted containment and its volume.
\end{proof}

Thus, as in the centered Steinitz proof, we seek a rainbow
selection with a large determinant; here we must also control
the height sum $L$. The affine conic identity below provides the
additional control of $L$.

\subsection{Affine conic sums and supporting vertices}
\label{sec:conic_identities}

We again state the colorful identity in ambient dimension $n$,
using $\mathcal H(y,h)$ for the same halfspace formula in $\R^n$.
For $n$ balanced colors, use $\mathcal B(u)$ from
\Href{Definition}{dfn:rainbow_bases}. Attach a real height $h_y$ to
each indexed normal $y$. For a rainbow basis
$B = [r_1\ \dots\ r_n]$, write
\[
 h_B := \transpose{\parenth{h_{r_1}, \dots, h_{r_n}}},
 \qquad p_B := B^{-T}h_B.
\]
The point $p_B$ is the common point of the $n$ supporting
hyperplanes $\iprod{r_i}{x} = h_{r_i}$.
If $u \in \poscone{B}$ and $a := B^{-1}u \ge 0$, then
\[
 \sup_{x \in \bigcap_{i \in \brackets{n}}\mathcal H(r_i,h_{r_i})}
 \iprod{u}{x}
 = \iprod{u}{p_B}
 = \sum_{i \in \brackets{n}}a_i h_{r_i}.
\]
Indeed, adding the inequalities with coefficients $a_i$ gives
the upper bound, and $p_B$ attains it.

The determinant-weighted sum of these support values is
\begin{equation}
 G_h(u) := \sum_{B \in \mathcal B(u)}
 \abs{\det B}\,\iprod{u}{p_B}
 = \sum_{B \in \mathcal B(u)}
 \abs{\det B}\,\iprod{h_B}{B^{-1}u}.
 \label{eq:affine_conic_sum}
\end{equation}
In \Href{Lemma}{lem:cone_density}, balance makes the sum of
determinant weights constant. The next lemma gives the affine
counterpart: when the heights also sum to zero in each color,
the gradient of $G_h$ is constant. We will apply it to the height
differences $h_y - \beta_i$ in color $i$. This allows us to replace
the original heights by their color averages, at the cost of
one linear function of the direction.

\begin{lem}
\label{lem:affine_conic_identity}
Under the balance assumptions of \Href{Lemma}{lem:cone_density},
suppose additionally that every rainbow selection of $n$ vectors
is linearly independent. If $\sum_{y \in Y_i}h_y = 0$ in each
color, then $G_h$ agrees with a linear function on all generic
directions. Consequently, two height assignments with the same
sum in every color have conic sums differing by a linear function.
\end{lem}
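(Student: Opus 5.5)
The plan is to show that $G_h$ is linear on each chamber of the arrangement $\mathcal H$ from the proof of \Href{Lemma}{lem:cone_density}, and that the linear functions on adjacent chambers coincide. The argument follows that lemma's wall-crossing scheme, with the constant determinant weight replaced by a linear function.

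\emph{Step 1: linearity on chambers.} On a chamber the family $\mathcal B(u)$ is constant. Each term $\abs{\det B}\,\iprod{h_B}{B^{-1}u}$ is linear in $u$. So $G_h$ agrees on each chamber with a linear function $\ell_{\mathcal C}$. Any two chambers are joined by a path crossing one wall at a time, away from pairwise intersections. It therefore suffices to show $\ell_{\mathcal C}=\ell_{\mathcal C'}$ for chambers on opposite sides of one $H\in\mathcal H$ near a point $z\in H$ lying on no other member of $\mathcal H$.

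\emph{Step 2: reduction to one facet group.} As in \Href{Lemma}{lem:cone_density}, bases not having $z$ on their boundary contribute the same linear term on both sides. The remaining bases split into groups indexed by a facet $F=(f_i)_{i\in I}$ with $I=\brackets{n}\setminus\braces{k}$, consisting of the completions $B_F(y)$ with $y\in Y_k$. It suffices to show that, for each group, the sum of linear terms over $y\in Y^+$ equals the sum over $y\in Y^-$ as linear functions of $u$. Here $Y^\pm=\braces{y\in Y_k:\pm s_y>0}$ with $s_y=\iprod{\nu}{y}$, following \Href{Lemma}{lem:normal_shifts}.

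The general-position hypothesis enters here. Every rainbow $n$-tuple is independent, so $s_y\neq0$ for all $y\in Y_k$. Thus every completion lies on one side or the other, and $Y_k=Y^+\cup Y^-$.

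\emph{Step 3: the computation.} Write $u=\sum_{i\in I}b_if_i+s\nu$ and $y=\sum_{i\in I}c_i(y)f_i+s_y\nu$. Let $\omega$ be the $(n-1)$-volume of the facet. Then $B_F(y)^{-1}u$ has $y$-coefficient $s/s_y$ and $f_i$-coefficients $b_i-s c_i(y)/s_y$. Also $\abs{\det B_F(y)}=\pm\omega s_y$ for $y\in Y^\pm$.

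Multiplying out, the term for $y$ equals $\pm$ the expression
\[
\omega s_y\sum_{i\in I}h_{f_i}b_i+\omega s\Bigl(h_y-\sum_{i\in I}h_{f_i}c_i(y)\Bigr).
\]
Hence the difference between the $Y^+$-sum and the $Y^-$-sum is the sum of this expression over all $y\in Y_k$:
\[
\omega\Bigl(\sum_{y\in Y_k}s_y\Bigr)\sum_{i\in I}h_{f_i}b_i+\omega s\Bigl(\sum_{y\in Y_k}h_y-\sum_{i\in I}h_{f_i}\sum_{y\in Y_k}c_i(y)\Bigr).
\]
Balance $\sum_{y\in Y_k}y=0$ gives $\sum_y s_y=0$ and $\sum_y c_i(y)=0$. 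The hypothesis gives $\sum_y h_y=0$. So the difference vanishes identically in $(b,s)$, that is, as a linear function of $u$.

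Consequently all chambers carry the same linear function, so $G_h$ is linear on generic directions. The final assertion follows because $G_h$ is linear in $h$. The difference of two assignments with equal color sums has zero sum in each color, so by the first part it gives a linear function of $u$.

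The main obstacle is Step 3: one must check that the jump vanishes as a linear function, not merely at points of $H$ as in \Href{Lemma}{lem:cone_density}. This is where both the height balance and the $f$-coordinates of $y$ (through $\sum_y c_i(y)=0$) are used. The other delicate point is ensuring no completion lies in $H$, which is exactly what the general-position assumption supplies.
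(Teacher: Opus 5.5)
Your proposal is correct and is essentially the paper's proof: the same wall-crossing over facet groups, with each jump cancelling because $\sum_{y\in Y_k}y=0$ and $\sum_{y\in Y_k}h_y=0$, and with independence used to guarantee $s_y\neq0$. The paper phrases the jump through the gradient vectors $\sum\abs{\det B}\,p_B$ and the interpolation point $p_F\in H$, and your coordinate expression is exactly its identity $D_F=0$ paired with $u$, since $\iprod{p_F}{u}=\sum_{i\in I}h_{f_i}b_i$ and $\iprod{p_F}{y}=\sum_{i\in I}h_{f_i}c_i(y)$. The paper treats $n=1$ separately, whereas your argument covers that case only through the degenerate empty-facet convention.
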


\begin{proof}
For $n=1$, the slopes of $G_h$ on the positive and negative
rays are respectively $\sum_{y \in Y_1,\,y>0} h_y$ and
$-\sum_{y \in Y_1,\,y<0} h_y$.
They agree because $\sum_{y\in Y_1}h_y=0$; zero normals are excluded
by the independence assumption. Thus, assume $n \ge 2$.

In any neighborhood where $\mathcal B(u)$ is fixed, the function
$G_h$ is linear, with gradient
\[
 g = \sum_{B \in \mathcal B(u)}\abs{\det B}\,p_B.
\]
We compare these gradients under the normal displacements used
in the proof of \Href{Lemma}{lem:cone_density}.

Use its notation $H,z,\nu,F = (f_i)_{i \in I},k$ for one
of the indexed facets, and let $\omega_F$ be the volume of
the parallelotope spanned by its vectors. There is a
unique vector $p_F \in H$ with
$\iprod{p_F}{f_i} = h_{f_i}$ for every $i \in I$. For a
completion $y \in Y_k$, set $s_y := \iprod{\nu}{y}$. Every
$s_y$ is nonzero by the independence assumption, and solving
the interpolation equations gives
\[
 p_{B_F(y)}
 = p_F+\frac{h_y-\iprod{p_F}{y}}{s_y}\nu.
\]
Put $Y_k^\pm := \braces{y \in Y_k:\pm s_y > 0}$.
By \eqref{eq:conic_transition}, in passing from $z-t\nu$ to
$z+t\nu$, all completions from $Y_k^+$ enter the sum and all
completions from $Y_k^-$ leave it. Thus, the change contributed
by the whole facet is
\[
 D_F := 
 \sum_{y \in Y_k^+}\abs{\det B_F(y)}\,p_{B_F(y)}
 -\sum_{y \in Y_k^-}\abs{\det B_F(y)}\,p_{B_F(y)}.
\]
Using $\abs{\det B_F(y)} = \omega_F\abs{s_y}$ yields
\[
 D_F = \omega_F\sum_{y \in Y_k}
 \brackets{s_y p_F+\parenth{h_y-\iprod{p_F}{y}}\nu} = 0.
\]
Indeed, $\sum_{y \in Y_k}s_y = \iprod{\nu}{\sum_{y \in Y_k}y} = 0$,
and
$\sum_{y \in Y_k}\parenth{h_y-\iprod{p_F}{y}} = 0$
by the two balance assumptions.

As proved in \Href{Lemma}{lem:cone_density}, every changing
basis belongs to exactly one of these facet groups.
Summing $D_F = 0$ over them shows that $g$ is unchanged by the
normal displacement. As in that lemma, finiteness now gives
the same $g$ on the whole generic set. Since each local expression
is homogeneous,
$G_h(u) = \iprod{g}{u}$ on that set. Finally, apply this
conclusion to the difference of two height assignments with
equal sums in every color.
\end{proof}

In particular, replacing $h_y$ in color $i$ by
$h_y+\iprod{c_i}{y}$ changes $G_h$ by a linear function.
This describes an independent translation of each color of
halfspaces, since the added heights sum to zero by balance.
The same conclusion holds for any changes of the heights whose
sum is zero in each color, including the replacement of each
height by its color average. Linear independence ensures that all interpolation
points in the gradient calculation are well defined; the
polyhedral reduction supplies this assumption.

\subsection{Choosing a direction and a rainbow}
We return to $2d$ balanced colors $Y_1, \dots, Y_{2d} \subset \R^d$
with heights attached to their normals. For a partition into
groups $I_1,I_2$ of $d$ colors, write $\mathcal B_k(u)$
for the rainbow bases of group $I_k$ whose positive cones
contain $u$. Suppose each group has a rainbow basis. By
\Href{Lemma}{lem:cone_density}, the sums
\[
 S_k := \sum_{B \in \mathcal B_k(u)}\abs{\det B},
 \qquad k\in\brackets{2},
\]
are positive and independent of the generic direction $u$.
The families $\mathcal B_k(u)$ may change with $u$, but their
total determinant weights $S_k$ do not.

Fix a generic unit vector $u$. We consider the finite family
of pairs
\[
 \mathcal P(u) := \mathcal B_1(u)\times\mathcal B_2(-u).
\]
Every pair gives one normal $r_i$ of each color and unique
positive coefficients satisfying
\[
 u = \sum_{i\in I_1}a_i r_i,
 \qquad -u = \sum_{i\in I_2}a_i r_i.
\]
In particular, the $2d$ weighted normals sum to zero. Write
$P(B_1,B_2)$ for the intersection of the selected halfspaces.
Adding their inequalities with these coefficients gives
\[
 \iprod{u}{x}\le\sum_{i\in I_1}a_i h_{r_i}
   =\iprod{u}{p_{B_1}},
 \qquad
 \iprod{-u}{x}\le\sum_{i\in I_2}a_i h_{r_i}
   =\iprod{-u}{p_{B_2}}
\]
for every $x\in P(B_1,B_2)$. Thus,
\[
 P(B_1,B_2) \subset
 \braces{x:\iprod{u}{p_{B_2}} \le \iprod{u}{x}
 \le \iprod{u}{p_{B_1}}}.
\]
Since $u$ is a unit vector, the difference between the two
defining levels is
\[
 L_u(B_1,B_2) := \iprod{u}{p_{B_1}-p_{B_2}}
 = \sum_{i \in \brackets{2d}}a_i h_{r_i}.
\]
If $P(B_1,B_2)$ is non-empty, then $L_u(B_1,B_2) \ge 0$
and this difference is the width of the containing strip.
The two supporting vertices belong to the intersections
defined by their own bases, but need not belong to
$P(B_1,B_2)$. Accordingly, $L_u(B_1,B_2)$ bounds the width
of this intersection in direction $u$ from above.

We now average over the pairs in $\mathcal P(u)$, keeping $u$
fixed. Assign $(B_1,B_2)$ the positive weight
\[
 w_u(B_1,B_2) :=
 \frac{\abs{\det B_1}\abs{\det B_2}}{S_1S_2}.
\]
The two bases range independently over their covering families.
Hence the sum of these product weights factors as
\[
 \sum_{B_1\in\mathcal B_1(u)}
 \sum_{B_2\in\mathcal B_2(-u)}w_u(B_1,B_2)
 =\parenth{\frac{1}{S_1}\sum_{B_1\in\mathcal B_1(u)}\abs{\det B_1}}
  \parenth{\frac{1}{S_2}\sum_{B_2\in\mathcal B_2(-u)}\abs{\det B_2}}
 =1.
\]
Here the second factor uses the same constant $S_2$ at the
direction $-u$. More specifically, fixing either basis and
summing over the other gives
\[
 \sum_{B_2\in\mathcal B_2(-u)}w_u(B_1,B_2)
   =\frac{\abs{\det B_1}}{S_1},
 \qquad
 \sum_{B_1\in\mathcal B_1(u)}w_u(B_1,B_2)
   =\frac{\abs{\det B_2}}{S_2}.
\]
These identities explain how to compute the average width.
The upper level $\iprod{u}{p_{B_1}}$ depends only on $B_1$;
the negative of the lower level is
$\iprod{-u}{p_{B_2}}$ and depends only on $B_2$. Summing the two
contributions separately yields
\begin{equation}
\begin{aligned}
 \overline L(u)
 &:= \sum_{B_1 \in \mathcal B_1(u)}
     \sum_{B_2 \in \mathcal B_2(-u)}
     w_u(B_1,B_2)L_u(B_1,B_2)\\
 &= \frac{1}{S_1}\sum_{B_1 \in \mathcal B_1(u)}
       \abs{\det B_1}\iprod{u}{p_{B_1}}
    +\frac{1}{S_2}\sum_{B_2 \in \mathcal B_2(-u)}
       \abs{\det B_2}\iprod{-u}{p_{B_2}}
  = \frac{G_{h,1}(u)}{S_1}+\frac{G_{h,2}(-u)}{S_2},
\end{aligned}
 \label{eq:mean_strip_width}
\end{equation}
where $G_{h,k}$ denotes \eqref{eq:affine_conic_sum} for group
$I_k$. In particular, the second conic sum is evaluated at $-u$,
which accounts for the plus sign in the last line.

This choice of weights serves two purposes. It expresses the
average through the conic sums, so the affine conic identity
can control the heights. It also makes the total weight of
pairs containing a basis with a small determinant small. We
will use both facts to choose a pair whose strip width and
determinant are simultaneously controlled.

We first choose a line away from every facet span. Both
orientations will remain available to control the average.
Separation has a second role: in every covering basis, it
prevents any coefficient of the direction from becoming too
small. This will keep the weighted normals quantitatively
spanning after the pair is selected.
\begin{lem}
\label{lem:separated_direction}
Given at most $2dN$ vectors of norm at most $\sqrt N$ in
$\R^d$, there is a unit vector $u_0$ at distance at least
\begin{equation}
 \eta := \eta_{d,N} := 
 \frac{1}{12\sqrt d\,(2dN)^{d-1}}
 \label{eq:separation_scale}
\end{equation}
from the span of every $d-1$ vectors in the configuration.
For $u \in \braces{u_0,-u_0}$ and any basis $B$ with
$u = \sum_{j \in \brackets{d}} a_j r_j$, $a_j > 0$, one has
\begin{equation}
 \frac{\eta}{\sqrt N} \le a_j
 \le \frac{N^{(d-1)/2}}{\abs{\det B}}.
 \label{eq:coefficient_bounds}
\end{equation}
\end{lem}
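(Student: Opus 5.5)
The plan has two parts: a measure-of-bad-set argument on the sphere to find $u_0$, then Cramer's rule to get both coefficient bounds.

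\emph{Finding $u_0$.} Let $W$ be the subspace spanned by $d-1$ of the vectors. It is contained in some hyperplane $W'$, and $\mathrm{dist}(u,W)\ge\mathrm{dist}(u,W')=\abs{\iprod{u}{\nu_{W'}}}$ for a unit normal $\nu_{W'}$ of $W'$. So every bad set lies in a spherical zone
\[
 \braces{u\in\Sd:\abs{\iprod{u}{\nu}}<\eta}.
\]
The number of $(d-1)$-subsets is at most $\binom{2dN}{d-1}\le(2dN)^{d-1}$. For the size of one zone I would use the standard estimate that the normalized measure of $\braces{\abs{u_1}<\eta}$ is at most $\eta\sqrt{d}$ up to a small absolute constant. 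This comes from projecting onto the $u_1$ coordinate: the density of $u_1$ is proportional to $(1-t^2)^{(d-3)/2}$ with maximum about $\sqrt{d/(2\pi)}$. For $d=2$ the density is unbounded at $\pm1$, but near $t=0$ it is $1/\pi$, so the bound still holds. With $\eta$ as in \eqref{eq:separation_scale}, the total measure of the zones is at most
\[
 (2dN)^{d-1}\cdot c\,\eta\sqrt d\le c/12<1,
\]
so some unit $u_0$ avoids every zone. The factor $12$ leaves room for the constant $c$, which is at most about $2$. Since the distance condition is symmetric, $-u_0$ works as well.

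\emph{Upper bound on $a_j$.} This is Cramer's rule followed by Hadamard's inequality, exactly as in the proof of \Href{Theorem}{thm:conic_completion}:
\[
 a_j=\frac{\abs{\det[r_1\ \dots\ u\ \dots\ r_d]}}{\abs{\det B}}\le\frac{\enorm{u}\prod_{i\ne j}\enorm{r_i}}{\abs{\det B}}\le\frac{N^{(d-1)/2}}{\abs{\det B}}.
\]

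\emph{Lower bound on $a_j$.} Let $W_j$ be the span of $\braces{r_i:i\ne j}$. Writing $u=a_jr_j+w$ with $w\in W_j$ gives
\[
 \eta\le\mathrm{dist}(u,W_j)=a_j\,\mathrm{dist}(r_j,W_j)\le a_j\enorm{r_j}\le a_j\sqrt N,
\]
hence $a_j\ge\eta/\sqrt N$.

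The main obstacle is the uniform zone estimate with an explicit constant good enough to absorb the factor $12$, including the case $d=2$. Everything else is routine.
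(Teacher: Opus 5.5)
Your proposal is correct and follows the paper's proof. You use a union bound over at most $(2dN)^{d-1}$ spherical zones containing the $\eta$-neighborhoods of the spans. The upper bound comes from Cramer's rule with Hadamard's inequality, and the lower bound from the distance of $u$ to $\Lin\braces{r_k:k\ne j}$. The only difference is that you derive the zone estimate from the marginal density of one coordinate, where the paper cites the B\"or\"oczky--Wintsche bound $\sqrt{2d}\,\omega$. Your route gives the constant $\sqrt{2/\pi}$, and for $d=2$ the zone measure $\frac{2}{\pi}\arcsin\eta\le\eta$. Either constant is absorbed by the factor $12$.
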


\begin{proof}
Let $\sigma$ be the standard Haar probability measure on $\Sd$.
The spherical-cap estimates of B\"or\"oczky and Wintsche
\cite{boroczky2003covering} give, for a unit vector $e$ and
$0 \le \omega \le 1$,
\[
 \sigma\braces{u \in \Sd:\abs{\iprod{u}{e}} \le \omega}
 \le \sqrt{2d}\,\omega.
\]
For $d \le 2$, this estimate also follows by a direct computation.
A proper linear subspace is contained in a hyperplane, so the
same bound applies to its $\omega$-neighborhood on the sphere.
There are at most $(2dN)^{d-1}$ spans to avoid. The measure of
the union of their $\eta$-neighborhoods is at most
\[
 (2dN)^{d-1}\sqrt{2d}\,\eta = \frac{\sqrt 2}{12} < 1.
\]
Choose $u_0$ outside this union. The union is centrally symmetric,
so $-u_0$ satisfies the same separation bound.

Project $u = \sum_{k \in \brackets{d}}a_k r_k$ orthogonally to the
complement of $\Lin\braces{r_k:k \ne j}$. Its projected length
is at least $\eta$, so $a_j\enorm{r_j} \ge \eta$.
This gives the lower bound in \eqref{eq:coefficient_bounds}.
Cramer's rule and Hadamard's inequality give the upper bound,
since $\enorm{u} = 1$ and every $\enorm{r_k} \le \sqrt N$.
\end{proof}

\begin{lem}
\label{lem:joint_selection}
Let $Y_1, \dots, Y_{2d} \subset \R^d$ be normalized, with at most
$N$ vectors per color. Suppose every rainbow set of $d$
vectors is independent. Assign heights $h_y$ satisfying
\[
 0 \le \beta_i := \frac{1}{\card{Y_i}}\sum_{y \in Y_i}h_y \le b,
 \qquad b>0.
\]
Assume every rainbow intersection of the halfspaces
$\braces{x:\iprod{y}{x} \le h_y}$ is non-empty.
Put $D_0 := D_0(d,N) = \delta_{2d}N^{-d/2}$, as in
\eqref{eq:basis_scale}. Let $\eta = \eta_{d,N}$ be as in
\eqref{eq:separation_scale}, and set
\begin{equation}
 K_0 := \frac{2dbN^dN^{(d-1)/2}}{D_0}.
 \label{eq:height_scale}
\end{equation}
Then there are $r_i \in Y_i$ and $a_i>0$ such that
\[
 \sum_{i \in \brackets{2d}}a_i r_i = 0,
 \qquad 0 \le L := \sum_{i \in \brackets{2d}}a_i h_{r_i}\le2K_0,
\]
and
\begin{equation}
 M := \sum_{i \in \brackets{2d}}a_i^2r_i\otimes r_i,
 \qquad
 \sqrt{\det M} \ge 
 \parenth{\frac{\eta}{\sqrt N}}^d\frac{D_0}{4N^d}.
 \label{eq:joint_determinant}
\end{equation}
\end{lem}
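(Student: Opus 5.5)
Following the outline of Section~\ref{sec:helly_idea}, I first apply \Href{Lemma}{lem:two_groups} to split the colors into groups $I_1,I_2$ of $d$ colors each; each group has a rainbow basis with $\abs{\det}\ge D_0$. By \Href{Lemma}{lem:cone_density}, the totals $S_1,S_2$ are then positive and constant on generic directions, and $S_1\ge D_0$. Next I apply \Href{Lemma}{lem:separated_direction} to the whole configuration (at most $2dN$ vectors of norm at most $\sqrt N$) to obtain $u_0$. I perturb it slightly so that it is generic, with $\eta$ replaced by, say, $\eta/2$, or simply keep $u_0$ if it is already generic. The strict positivity of the coefficients makes this harmless. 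For $u=\pm u_0$, every covering basis then has all coefficients at least $\eta/\sqrt N$.

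To control the heights, I write $h_y=\beta_i+(h_y-\beta_i)$ in color $i$. The second summand sums to zero in each color, and every rainbow $d$-set is independent. So \Href{Lemma}{lem:affine_conic_identity}, applied in each group, shows that $G_{h,k}-G_{\beta,k}$ is linear. By \eqref{eq:mean_strip_width}, $\overline L(u)=\overline L_\beta(u)+\iprod{v}{u}$ for a fixed $v$. For constant heights, $L_u=\sum_i a_i\beta_i\le b\sum_i a_i$. By \eqref{eq:coefficient_bounds} each coefficient is at most $N^{(d-1)/2}/\abs{\det B}$, so
\[
\overline L_\beta(u)\le \frac{b}{S_1}\sum_{B_1}\abs{\det B_1}\cdot\frac{dN^{(d-1)/2}}{\abs{\det B_1}}+(\text{same for }B_2)\le \frac{2dbN^dN^{(d-1)/2}}{D_0}=K_0.
\]
Here I use that each $\mathcal B_k$ has at most $N^d$ bases and that $S_k\ge D_0$. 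I then choose $u\in\{u_0,-u_0\}$ with $\iprod{v}{u}\le0$, which gives $\overline L(u)\le K_0$.

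For the selection, every pair in $\mathcal P(u)$ has a non-empty rainbow intersection by hypothesis, so $L_u\ge0$. By Markov's inequality, pairs with $L_u>2K_0$ carry weight less than $1/2$. The pairs whose first basis has $\abs{\det B_1}<S_1/(4N^d)$ carry weight at most $N^d\cdot \frac{1}{4N^d}=1/4$. Hence some pair survives both exclusions. It has $0\le L\le 2K_0$ and $\abs{\det B_1}\ge S_1/(4N^d)\ge D_0/(4N^d)$, and all its coefficients are positive with $a_i\ge\eta/\sqrt N$. Then $M\succeq (\eta/\sqrt N)^2B_1\transpose{B_1}$, and monotonicity of the determinant in the Loewner order gives \eqref{eq:joint_determinant}. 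The balance $\sum_i a_ir_i=u-u=0$ holds by construction.

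The main obstacle is the bookkeeping in the second paragraph. One must check that \Href{Lemma}{lem:affine_conic_identity} applies separately in each group of $d$ colors, since each group's colors are balanced. One must also check that the linear discrepancy enters $\overline L(u)$ with a consistent sign, so that a single orientation choice works: the second group is evaluated at $-u$, but a linear function is odd, so the discrepancy is still $\iprod{v}{u}$. Finally, genericity of $u_0$ has to be reconciled with the separation estimate.
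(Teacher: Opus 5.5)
Your proposal is correct and follows the paper's proof essentially step by step: the partition from \Href{Lemma}{lem:two_groups}, the affine conic identity reducing $\overline L$ to $\overline L_\beta$ plus one linear term, the choice of orientation of $u_0$, and the $1/2+1/4$ weight exclusion; your threshold $S_1/(4N^d)$ and the Loewner-monotonicity step are cosmetic variants of the paper's threshold $D_0/(4N^d)$ and its Cauchy--Binet step. The one thing to tidy is genericity: $u_0$ lies at distance at least $\eta>0$ from every span of $d-1$ vectors, and hence of at most $d-1$ vectors, so both $\pm u_0$ are already generic and no perturbation is needed. You should drop the $\eta/2$ option, since it would lose a factor $2^{-d}$ in \eqref{eq:joint_determinant}.
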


\begin{proof}
Partition the colors into $I_1,I_2$ by
\Href{Lemma}{lem:two_groups}, and use the preceding notation
$\mathcal B_k(u)$, $S_k$, $w_u$, and $L_u$ for these groups.
Each group has a rainbow basis of determinant at least $D_0$.
Evaluating its conic determinant sum at a generic point in
the interior of that basis cone, and then using
\Href{Lemma}{lem:cone_density}, gives
$S_k\ge D_0$ for every generic direction and each
$k\in\brackets{2}$. Every pair of covering bases gives a
non-empty rainbow intersection by assumption, so every strip
width appearing in \eqref{eq:mean_strip_width} is nonnegative.

We first bound the part of the average that depends on the
color averages $\beta_i$. Let $G_{\beta,k}$ be the conic sum
obtained by assigning the constant height $\beta_i$ to every
normal of color $i$. For a covering basis $B$ of group $I_k$,
write $a_i(B,u)$ for the coefficient of its color-$i$ normal in
the representation of $u$. Thus, for a generic unit vector $u$,
\[
 G_{\beta,k}(u)
 =\sum_{B\in\mathcal B_k(u)}\abs{\det B}
       \sum_{i\in I_k}\beta_i a_i(B,u).
\]
All summands are nonnegative. Cramer's rule and Hadamard's
inequality give
$\abs{\det B}a_i(B,u)\le N^{(d-1)/2}$: the relevant determinant
has one column $u$ of length one and $d-1$ columns of length
at most $\sqrt N$. Since there are at most $N^d$ covering bases
and $d$ coefficients in each, we obtain
\[
 0\le\frac{G_{\beta,k}(u)}{S_k}
 \le\frac{1}{S_k}\sum_{B\in\mathcal B_k(u)}
                     \sum_{i\in I_k}bN^{(d-1)/2}
 \le\frac{dbN^dN^{(d-1)/2}}{D_0}.
\]

We now compare this with the original heights. In each color,
\[
 \sum_{y\in Y_i}(h_y-\beta_i)
 =\sum_{y\in Y_i}h_y-\card{Y_i}\beta_i=0.
\]
Hence
\Href{Lemma}{lem:affine_conic_identity} gives fixed vectors
$v_1,v_2$ such that, for every generic $u$ and $k \in \brackets{2}$,
\[
 G_{h,k}(u) = G_{\beta,k}(u)+\iprod{v_k}{u}.
\]
Define
\[
 \overline L_\beta(u) :=
 \frac{G_{\beta,1}(u)}{S_1}+\frac{G_{\beta,2}(-u)}{S_2},
 \qquad v := \frac{v_1}{S_1}-\frac{v_2}{S_2}.
\]
The finite average \eqref{eq:mean_strip_width} now satisfies
\[
 \overline L(u) = \overline L_\beta(u)+\iprod{v}{u},
 \qquad 0 \le \overline L_\beta(u) \le K_0.
\]
The vector $v$ is independent of $u$.
This is the useful consequence of the affine conic identity:
all height differences with zero color averages contribute a
single linear function, even though the covering bases vary
with the direction.

Take $u_0$ from \Href{Lemma}{lem:separated_direction} and choose
$u \in \braces{u_0,-u_0}$ with $\iprod{v}{u} \le 0$.
Both orientations satisfy the same separation bound, while
this choice of sign gives
\[
 0\le\overline L(u)\le\overline L_\beta(u)\le K_0.
\]
The first inequality uses non-emptiness of every rainbow
intersection. Geometrically, we have found a direction for
which the containing strips have small determinant-weighted
average width.

It remains to find one pair with both a small strip width and
a sufficiently large basis determinant. We use the same
weights $w_u$ for both requirements. Write
$\mathcal Q=(B_1,B_2)\in\mathcal P(u)$ for a pair, and put
\[
 t := \frac{D_0}{4N^d}.
\]
The pairs to be excluded are
\[
\begin{aligned}
 \mathcal P_L &:= \braces{\mathcal Q \in \mathcal P(u):L_u(\mathcal Q)>2K_0},\\
 \mathcal P_D &:= \braces{\mathcal Q \in \mathcal P(u):\abs{\det B_1}<t}.
\end{aligned}
\]
Because every strip width is nonnegative, their average bounds
the contribution of the first excluded family:
\[
 2K_0\sum_{\mathcal Q\in\mathcal P_L}w_u(\mathcal Q)
 \le\sum_{\mathcal Q\in\mathcal P_L}
                  w_u(\mathcal Q)L_u(\mathcal Q)
 \le\overline L(u)\le K_0.
\]
Thus, $\mathcal P_L$ carries at most half of the total weight.
For $\mathcal P_D$, sum first over the second basis. The product
form of $w_u$ removes that sum and gives
\[
 \sum_{\mathcal Q \in \mathcal P_D}w_u(\mathcal Q)
 = \frac{1}{S_1}
 \sum_{\substack{B_1 \in \mathcal B_1(u)\\\abs{\det B_1}<t}}
 \abs{\det B_1}
 \le \frac{N^dt}{D_0} = \frac{1}{4}.
\]
Here we used the bound $N^d$ on the number of first-group bases
and $S_1\ge D_0$. The weight of the pairs outside both families
is therefore at least $1-1/2-1/4=1/4$. Choose one such pair.
For its normals and conic coefficients, we have
\[
 \sum_{i\in\brackets{2d}}a_i r_i=0,
 \qquad 0\le L\le2K_0,
 \qquad \abs{\det B_1}\ge t.
\]
Moreover, separation and \eqref{eq:coefficient_bounds} give
$a_i\ge\eta/\sqrt N$ for every $i\in\brackets{2d}$.

The determinant of $B_1$ controls one set of $d$ independent
normals. The coefficient lower bounds ensure that these normals
remain quantitatively independent after multiplication by the
$a_i$. To express this in terms of the operator needed by the
dual ellipsoid lemma, form
$U := [a_1r_1\ \dots\ a_{2d}r_{2d}]$.
Then $M = U\transpose{U}$. For a subset $J \subset \brackets{2d}$
of size $d$, let $R_J$ be the matrix of the corresponding normals,
ordered by their color indices. The Cauchy--Binet formula gives
\[
 \det M =
 \sum_{\substack{J \subset \brackets{2d}\\\card{J} = d}}
 \parenth{\prod_{i \in J}a_i^2}\abs{\det R_J}^2
 \ge \parenth{\prod_{i \in I_1}a_i^2}\abs{\det B_1}^2
 \ge \parenth{\frac{\eta^2}{N}}^d t^2.
\]
Taking square roots proves \eqref{eq:joint_determinant}.
Only the term indexed by $I_1$ is needed in this sum of
nonnegative squared determinants. This explains why the
selection argument only had to bound the determinant of the
first basis from below.
\end{proof}

The two representations can also be recorded by the affine lift
\[
 \widetilde Y_i := 
 \braces{(y,0,h_y),(0,y,h_y):y \in Y_i} \subset \R^{2d+1}.
\]
Their weighted sum is $(u,-u,L)$. The first $2d$ coordinates
retain the conic representations, while the last one records
the height controlled in \Href{Lemma}{lem:joint_selection}.

\subsection{Completion of the Helly proof}

\begin{proof}[Proof of \Href{Theorem}{thm:reduced}]
If some rainbow intersection is empty, there is nothing to prove.
Otherwise apply \Href{Definition}{dfn:reduced_construction}
to the polytopes after discarding redundant inequalities, with
$N = 2d$. By \Href{Lemma}{lem:normalized_halfspaces}, the resulting
colors are normalized, their normals remain in general position,
and their average heights are bounded by
$b := \kappa_d^{-1/d}$.

Apply \Href{Lemma}{lem:joint_selection} and then
\Href{Lemma}{lem:dual_ellipsoid} to the selected halfspaces.
Their intersection $P$, in the volume-preserving coordinates,
satisfies, by \eqref{eq:dual_ellipsoid_volume} and
\eqref{eq:joint_determinant},
\[
 \vol{d}P \le 
 \kappa_d\parenth{\frac{2\sqrt{2}K_0\sqrt N}{\eta}}^d
 \frac{4N^d}{D_0}.
\]
Substituting \eqref{eq:basis_scale} and
\eqref{eq:height_scale}, and using $\kappa_d b^d = 1$, gives
the explicit choice
\[
 A_d := 
 4(4\sqrt{2}d)^d\delta_{2d}^{-(d+1)}
 \eta_{d,2d}^{-d}(2d)^{2d^2+3d/2}.
\]
The bounds in \eqref{eq:delta} and \eqref{eq:separation_scale}
give $A_d = d^{O(d^2)}$. The coordinate change preserves volume, proving
the reduced theorem.
\end{proof}

Finally, \Href{Lemma}{lem:reduction} gives
\Href{Theorem}{thm:colorful_volume_helly} with
$\gamma_d = (Q_d A_d)^{-1} = d^{-O(d^2)}$.

\section*{Acknowledgments}

The author thanks M\'arton Nasz\'odi, J\'anos Pach, and Imre
B\'ar\'any for their infinite support and helpful discussions.

\bibliographystyle{alpha}
\begingroup
\raggedright
\bibliography{../work_current/uvolit}
\endgroup

\end{document}